\documentclass[11pt,letterpaper]{amsart}
\usepackage[utf8]{inputenc}
\usepackage{fancyhdr}
\usepackage{amsfonts}
\usepackage{amsmath}
\usepackage[active]{srcltx}
\usepackage{amsmath,amssymb}
\usepackage{mathtools}
\usepackage{longtable}
\usepackage[arrow,matrix,curve]{xy}
\usepackage{amstext}
\usepackage{amssymb}
\usepackage{amsfonts}
\usepackage{amssymb}
\usepackage{amsmath}
\usepackage{graphicx}
\usepackage{afterpage,float,amsmath}
\usepackage{graphicx}
\usepackage{enumitem}
\usepackage{upgreek}
\usepackage{tikz}
\usepackage{multirow}
\usepackage{caption}
\usepackage{mathrsfs}
\usepackage{amsthm}
\usepackage{hyperref}
\usepackage{todonotes}
\usepackage{tikz-cd}
\usepackage{tikz}
\usepackage{ulem}
\usepackage{tkz-graph}

\theoremstyle{plain}
\newtheorem{theorem}{Theorem}[section]
\newtheorem{lemma}[theorem]{Lemma}
\newtheorem{proposition}[theorem]{Proposition}
\newtheorem{corollary}[theorem]{Corollary}
\newtheorem{definition}[theorem]{Definition}
\newtheorem{remark}[theorem]{Remark}
\newtheorem{example}[theorem]{Example}
\newtheorem{conjecture}[theorem]{Conjecture}

\newcommand{\Jac}{\mathrm{Jac}}
\newcommand{\rank}{\mathrm{rank}~}

\newcommand{\Hess}{\mathrm{Hess}}
\newcommand{\hess}{\mathrm{hess}}

\newcommand{\Ann}{\mathrm{Ann}}
\newcommand{\codim}{\mathrm{codim}}
\newcommand{\rk}{\mathrm{rk}}

\newcommand\K{\mathbb{K}}

\setlength\tabcolsep{0.2cm}

\title{On higher Jacobians, Laplace equations and Lefschetz properties}
\author{Charles Almeida}
\address{ICEx - UFMG \\
Department of Mathematics,   Av. Ant\^onio Carlos, 6627\\
30123-970 Belo Horizonte, MG, Brazil}
\email{charlesalmeida@mat.ufmg.br}
\author{Aline V. Andrade}
\address{ICEx - UFMG \\
Department of Mathematics,   Av. Ant\^onio Carlos, 6627\\
30123-970 Belo Horizonte, MG, Brazil}
\email{andradealine@mat.ufmg.br}
\author{Rodrigo Gondim}
\address{Universidade Federal Rural de Pernambuco,\\
Av. Don Manoel de Medeiros s/n, Dois Irmãos - Recife - PE 52171-900, Brasil}
\email{rodrigo.gondim@ufrpe.br}

\begin{document}

\maketitle

\begin{abstract}
	Let $A$ be a standard graded $\K$-algebra of finite type over an algebraically closed field of characteristic zero. We use apolarity to construct, for each degree $k$, a projective variety whose osculating defect in degree $s$ is equivalent to the non maximality of the rank of the multiplication map for a power of a general linear form $\times L^{k-s}: A_s \to A_k$.
	In the Artinian case,  this notion corresponds  to the failure of the Strong Lefschetz property for $A$, which allows to reobtain some of the foundational theorems in the field. It also implies the SLP for codimension two Artinian algebras, a known result. The results presented in this work provide new insights on the geometry of monomial Togliatti systems, and offer a geometric interpretation of the vanishing of higher order Hessians.
	\medskip
   	 
   	 \noindent
   	 \textbf{Keywords:} Lefschetz Properties, Artinian Algebras, Higher Jacobians, Hessian, Togliatti systems.
   	 
   	 \medskip
   	 
   	 \noindent
   	 \textbf{Mathematics Subject Classification 2020:} 13A70; 13E10; 14M25; 14N15; 53A20.
\end{abstract}

\color{black}{.}
\section{Introduction}

Natural axioms of cohomology show that, in many categories, the cohomology ring of an object is an Artinian algebra satisfying Poincaré duality. Assuming they are commutative rings, these algebras can be characterized as Artinian Gorenstein algebras (see \cite{MW} and \cite[Theorem 2.71]{HMMNWW} ). Inspired by the Hard Lefschetz theorem for smooth complex projective varieties there is a contemporary effort to define smooth objects in certain categories in order to obtain a version of Lefschetz theorem among other related results. For instance, consider K\"ahler manifolds, simplicial complexes, convex polytopes, Coxeter groups, tropical varieties, and matroids (see \cite{Be,HL,Ka,KN,NW,St,St2}).   \\

In a purely algebraic context, Stanley and Watanabe introduced an abstraction called the weak and the strong Lefschetz properties (WLP and SLP, respectively) for any Artinian $\K$-algebra (see \cite{St,Wa}). In the Gorenstein case, Maeno and Watanabe, see \cite{MW}, gave a criterion for the failure of the SLP introducing the higher order Hessians. This criterion was generalized in \cite{GZ} to describe every multiplication map by a power of a linear form as a mixed order Hessian. These Hessians have become the main tool to detect Lefschetz properties for Artinian Gorenstein (AG) algebras, as soon as the Macaulay dual generator is known (see \cite{ AAISY2023, A2022, AI2020, BFP2022, DGI, Go, GZ,  GZ2,  HMV2020}). \\

When $\mathbb{K}$ is  an algebraically closed field of characteristic zero there is a differential version of the Macaulay-Matlis duality, closely related to the the classical apolarity, that has been proved to be very
efficient to understand the Lefschetz properties for the Gorenstein case. In particular, it introduces a  differential criteria to control the Lefschetz properties. When the Artinian algebra is not Gorenstein, very little is known about the Lefschetz properties and one of the main problems seems to be the lack of a differential criterion to detect them. \\

Here we consider a more general setting where the algebra is not necessarily Artinian, but it is of finite type $A=Q/I$. We will say that $A$ fails SLP if there is $s<k$ such that for every $L \in A_1$ the multiplication map $\times L^{k-s}:A_s \to A_k$ does not have maximal rank. The Macaulay-Matlis duality implies that the inverse system of an ideal is a finitely generated $Q$-module if and only if the algebra is Artinian (see \cite[Proposition 2.66]{HMMNWW}). On the other hand, in order to construct a geometric counterpart of this picture, we only need the graded pieces of the inverse system, which are all finitely dimensional $\K$-vector spaces. \\   

Artinian algebras of codimension two satisfy the SLP. If the codimension is at least three, there are examples of Artinian algebras failing the Lefschetz properties. The first example to be deeply understood was related to Togliatti surface, first studied by Enrico Togliatti in \cite{T2,T1}. It is a surface such that its second osculating space does not have the expected dimension. It turns out that this is related with the fact that the Macaulay-Matlis dual of its rational parametrization is a codimension three Artinian algebra failing the weak Lefschetz property from degree two to three. \\

The main result of \cite{MMRO2013} is a differential criterion to detect the failure of the Weak Lefschetz property for Artinian algebras whose ideal is generated by forms having the same degree $d$ and the failure occurs in the first possible degree by lack of injectivity. The differential criterion relates the failure of the WLP with the existence of non-trivial Laplace equations on the rational variety given by a projection of the $d$-th Veronese embedding. In other words, this variety has an osculating defect. The linear system defining the variety is the apolar of the linear system given by the graded piece of degree $d$ of the ideal. This result was generalized by \cite{GIV2014} to also include the lack of surjectivity. These results have strong consequences in the study of the so-called Togliatti systems (see Definition \ref{TogliattiSystem}). The algebraic and geometric properties of the Togliatti systems was extensively study in the past few years, see for instance \cite{AAM2019, CMMR2021, CMRS2022, GIV2014, MMR2016, MMRO2013,MMR, MMR2018}) for details.\\

Our main result is a differential criterion to detect the failure of SLP for any $\K$-algebra of finite type in any degree. From now on, $Q= \mathbb{K}[X_0, \cdots, X_n]$ is a polynomial ring with standard grading, $I \subset Q$ is a homogeneous ideal and $A=Q/I$.

\begin{theorem}
Let $I \subset Q$ be a homogeneous ideal generated by forms of arbitrary degree, $L\in Q_1$ a general linear form and $P=L^{\perp}$. Consider $$\varphi_k = \varphi_{I^{-1}_k}: \mathbb{P}^{n} \dashrightarrow \mathbb{P}^{h_k-1},$$ where $h_k=\dim A_k$ is the Hilbert function, the projection of the Veronese embedding by the linear system $|I^{-1}_k|$. Then the transpose of the matrix of the  multiplication map $\times L^{k-s}:A_s \to A_k$ is:
\[[\times L^{k-s}]^{tr} = (k-s)!\ \widetilde{\Jac}^{s}_{\varphi_k}(P).\]
Where $\widetilde{\Jac}^{s}_{\varphi_k}$ is the $k$-th Jacobian of
the map $\varphi_k$.
\end{theorem}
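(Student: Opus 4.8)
The plan is to unwind the definitions on both sides and match coordinates. The right-hand side involves the $s$-th "higher Jacobian" $\widetilde{\Jac}^s_{\varphi_k}$ of a rational map $\varphi_k \colon \mathbb{P}^n \dashrightarrow \mathbb{P}^{h_k-1}$ given by the linear system $|I^{-1}_k|$ — that is, whose coordinate functions are a basis $g_1,\dots,g_{h_k}$ of the inverse-system piece $I^{-1}_k \subset Q_k$ (the space apolar to $I_k$). Evaluating a higher Jacobian at the point $P = L^\perp \in \mathbb{P}^n$ should produce, up to the stated factorial, the matrix of partial-derivative data of the $g_j$'s encoding the osculating behavior at $P$. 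So the first step is to write down precisely what $\widetilde{\Jac}^s_{\varphi_k}(P)$ is as a matrix: its columns indexed by the basis elements $g_j$ of $I^{-1}_k$, its rows indexed by the degree-$(k-s)$... actually by a basis of the degree-$s$ part of the relevant apolar structure, with entries given by the appropriate order-$s$ "directional" derivatives of $g_j$ at $P$.

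**The coordinate computation.**

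Second, I would exploit the freedom in the choice of $L$: since $L$ is general we may, after a linear change of coordinates, take $L = X_0$, so that $P = (1:0:\cdots:0)$ and differentiating "in the direction $P$" becomes repeated application of $\partial/\partial X_0$, or dually, the apolar/contraction action of $X_0^{k-s}$ on $Q_k$. The key identity to establish is that for $g \in I^{-1}_k$, the osculating data of $\varphi_k$ at $P$ in the relevant order is governed by $X_0^{k-s}$ acting on $g$ (contraction), landing in $I^{-1}_s$, paired against a basis of $A_s = (Q/I)_s$. On the other side, the multiplication map $\times L^{k-s} = \times X_0^{k-s} \colon A_s \to A_k$ has a matrix whose transpose, with respect to dual bases, is exactly the pairing $A_s \times I^{-1}_k \to \K$ sending $(\overline{F}, g)$ to the apolarity pairing of $F X_0^{k-s}$ with $g$, equivalently $F \circ (X_0^{k-s} \circ g)$. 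This is the standard Macaulay–Matlis adjointness $\langle F \cdot X_0^{k-s}, g\rangle = \langle F, X_0^{k-s}\circ g\rangle$, together with the fact that the perfect pairing $A_k \times I^{-1}_k \to \K$ identifies $A_k^\vee$ with $I^{-1}_k$. Matching these two descriptions term by term — and tracking where the constant $(k-s)!$ enters, namely from the normalization difference between differentiation ($\partial_{X_0}^{k-s} X_0^{k-s} = (k-s)!$) and contraction (which is normalized to give $1$) — yields the theorem.

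**Main obstacle.**

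I expect the main obstacle to be purely bookkeeping: pinning down the precise definition of $\widetilde{\Jac}^s_{\varphi_k}$ for a \emph{rational} map (not a morphism) — in particular which chart / affine representative one differentiates in, and checking that evaluation at the general point $P$ is independent of that choice and of the scaling of the $g_j$'s — and then keeping the two dual bases (for $A_s$ and for $I^{-1}_k$, resp.\ for $A_k$) rigidly aligned so that "transpose" is literally correct rather than correct up to an unstated isomorphism. The conceptual content, by contrast, is light: it is the single adjunction formula for the apolarity pairing plus the observation that a power of a general linear form is, after a coordinate change, a coordinate power, so that "higher Jacobian at $P$" is computed by iterated $\partial_{X_0}$. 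Once the definition of $\widetilde{\Jac}^s$ is made explicit (presumably recalled from a preceding section of the paper), the proof should reduce to a short identification of matrix entries, with the factorial emerging transparently from $\partial_{X_0}^{k-s}(X_0^{k-s}) = (k-s)!$.
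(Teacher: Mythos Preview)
Your proposal is correct and follows essentially the same approach as the paper: choose a basis $\{F_i\}$ of $I^{-1}_k$, the dual basis $\{\beta_i\}$ of $A_k$ (so $\beta_i(F_j)=\delta_{ij}$), a basis $\{\gamma_j\}$ of $A_s$, write $L^{k-s}\gamma_j=\sum_i a_{ij}\beta_i$, and then identify the $(j,i)$-entry $\gamma_j(F_i)(P)$ of $\widetilde{\Jac}^s_{\varphi_k}(P)$ with $\frac{1}{(k-s)!}a_{ij}$. The only difference is cosmetic: rather than changing coordinates to $L=X_0$ and reading off the factorial from $\partial_{X_0}^{k-s}(x_0^{k-s})=(k-s)!$, the paper invokes the differential Euler formula $L^{k-s}\cdot G=(k-s)!\,G(P)$ for an arbitrary $L$ and any $G\in R_{k-s}$, which is exactly your identity stated coordinate-free and sidesteps the chart/bookkeeping worries you flagged.
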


One of the main corollaries is the following.

\begin{corollary}
Let $d,k,s$ be positive integers, with $s<k$, $I\subset Q$ an  ideal generated by homogeneous polynomials of arbitrary degrees, $d$ the lowest degree of the polynomials in $I$, $A=Q/ I$ and $L\in A_1$ a general linear form. Suppose that $\dim A_{s} \leq \dim A_k$ and $s<d$, then the linear map $\xymatrix {A_{s}\ar[r]^{\times L^{k-s}}& A_k}$ is not injective with kernel of dimension $\delta$ if and only if the variety $X_k$, the closure of the image of $\varphi_k:\mathbb{P}^n \dashrightarrow \mathbb{P}^{h_k-1}$ the projection of the Veronese embedding by the linear system $|I^{-1}_k|$, satisfies $\delta$ {\em non trivial} Laplace equation of order $s$.
\end{corollary}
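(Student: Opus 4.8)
The strategy is to read the Corollary as a purely numerical reformulation of the Theorem, converting the rank of the multiplication map into an osculating defect. First, since $\dim A_s\le\dim A_k$, the map $\times L^{k-s}\colon A_s\to A_k$ has maximal rank exactly when it is injective; hence the failure of injectivity with a $\delta$-dimensional kernel amounts to $\rk(\times L^{k-s})=\dim A_s-\delta$. By the Theorem, the matrix of $\times L^{k-s}$ equals $\widetilde{\Jac}^{s}_{\varphi_k}(P)$ up to transposition and multiplication by the nonzero constant $(k-s)!$, where $P=L^{\perp}$ and $L$ is general; since transposing and rescaling by a nonzero scalar preserve the rank, this gives $\rk\widetilde{\Jac}^{s}_{\varphi_k}(P)=\dim A_s-\delta$.

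Next, I would interpret $\rk\widetilde{\Jac}^{s}_{\varphi_k}(P)$ geometrically. As $L$ is general, $P=L^{\perp}$ is a general point of $\mathbb{P}^{n}$, and $\varphi_k$ is dominant onto $X_k$ by definition of $X_k$, so $\varphi_k(P)$ is a general point of $X_k$. By the construction of the higher Jacobian — using that the coordinates of $\varphi_k$ form a basis of $|I^{-1}_k|$ — the rows of $\widetilde{\Jac}^{s}_{\varphi_k}(P)$ are, up to the chosen normalization, the affine coordinate vectors of the points that span the $s$-th osculating space $T^{(s)}_{\varphi_k(P)}X_k$ at a general point of $X_k$; hence $\rk\widetilde{\Jac}^{s}_{\varphi_k}(P)=1+\dim T^{(s)}_{\varphi_k(P)}X_k$. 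Since $X_k$ is the image of $\mathbb{P}^{n}$ under a linear system of forms of degree $k$, the expected projective dimension of $T^{(s)}$ is $\binom{n+s}{n}-1$, so the number of Laplace equations of order $s$ satisfied by $X_k$ at its general point is $\binom{n+s}{n}-1-\dim T^{(s)}_{\varphi_k(P)}X_k=\binom{n+s}{n}-\rk\widetilde{\Jac}^{s}_{\varphi_k}(P)$.

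Finally, I would bring in the hypothesis $s<d$. Because $I$ is generated in degrees $\ge d$, every homogeneous element of $I$ has degree $\ge d$, so $I_s=0$, $A_s=Q_s$, and $\dim A_s=\binom{n+s}{n}$; in particular $X_k$ carries no \emph{trivial} Laplace equations of order $s$, and every such equation is non-trivial. Combining the previous two steps, the number of non-trivial Laplace equations of order $s$ satisfied by $X_k$ equals
\[
\binom{n+s}{n}-\rk\widetilde{\Jac}^{s}_{\varphi_k}(P)=\dim A_s-\rk\widetilde{\Jac}^{s}_{\varphi_k}(P)=\delta,
\]
so reading this identity in both directions yields the Corollary.

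The main obstacle is the second step: matching the higher Jacobian $\widetilde{\Jac}^{s}_{\varphi_k}$, as defined earlier in the paper, with the classical matrix of partial derivatives of order $\le s$ whose rank computes the dimension of the osculating space, and transporting genericity from the linear form $L$ to the point $\varphi_k(P)\in X_k$. One must also ensure that $X_k$ is $n$-dimensional, so that $\binom{n+s}{n}-1$ is genuinely the expected dimension of $T^{(s)}$; this is exactly where the hypotheses $s<d$, $\dim A_s\le\dim A_k$ and the genericity of $L$ enter, and it should follow from the set-up together with the Theorem. Once these identifications are granted, the rest is the elementary rank bookkeeping above.
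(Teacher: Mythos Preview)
Your proposal is correct and follows essentially the same approach as the paper: rank--nullity to write $\delta=\dim A_s-\rk[\times L^{k-s}]$, the Theorem to identify this rank with $\rk\widetilde{\Jac}^{s}_{\varphi_k}(P)$, the hypothesis $s<d$ to get $\dim A_s=\binom{n+s}{s}$ (and hence non-triviality), and finally the identification of $\binom{n+s}{s}-\rk\Jac^{s}_{\varphi_k}$ with the number of Laplace equations of order $s$. The paper's version is slightly more streamlined because it packages your ``second step'' into two prior Remarks: one showing $\rk\widetilde{\Jac}^{s}_{\varphi_k}=\rk\Jac^{s}_{\varphi_k}$ (the extra rows coming from $I_s$ are zero), and one taking $\delta=\binom{n+s}{s}-\rk\Jac^{s}_{\varphi_k}$ as the working definition of the osculating defect, so the concern you raise about the dimension of $X_k$ never arises explicitly.
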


The main Theorem and the Corollary yield  all previously cited  foundational results, that is:

\begin{enumerate}
	\item[(i)] SLP for Artinian algebras of codimension two (see \cite[Prop. 4.4]{HMNW} and our Corollary \ref{cor:codimension2}).
	\item[(ii)] The higher Hessian criterion (see \cite[Theorem 3.1]{MW} and our Corollary \ref{cor:higherhess}).
	\item[(iii)] The mixed Hessian criterion (see \cite[Theorem 2.4]{GZ2} and our Theorem \ref{littlebread}).
	\item[(iv)] The relation between osculating defectiveness and failure of WLP, the so-called {\it Tea Theorem} (see \cite[Theorem 3.2]{MMRO2013} and our Corollary \ref{Tea}).
\end{enumerate}

In Section \ref{TS}, we present the first new application of our main result.  We show that the order of the Laplace equations detected by the Tea Theorem (cf. \cite{MMRO2013}) for minimal monomial Togliatti systems (see Definition \ref{TogliattiSystem}) is the smallest possible. We hope that such result could shed some light on the problem of classifying toric varieties based on their osculating and inflectional behavior, a task that was previously addressed by Perkinson in \cite{P2000}.\\

Finally, in Section \ref{GIH}, as another new consequence of our main result, we obtain a geometric interpretation of the degeneracy of the higher Hessian and mixed Hessian. The search for this kind of result has lasted a decade and it is part of an effort involving many mathematicians to obtain higher order analogous to classical algebraic differential results. It includes higher order Gauss map, higher order Jacobians, higher order Milnor algebras and higher order Gordan-Noether theory (see \cite{IdP, DGI, Ol, Za2,Za}). \\

\vspace{0.3cm}
{\bf Acknowledgements.} The first and second authors would like to thank the warm hospitality of Federal University of Pernambuco, where part of this work was done. This work was partially funded by FAPEMIG universal project number APQ-01655-22, and also PPGMAT-UFMG.  

\section{Notation and Preliminaries}
In this section we will recall some standard results and fix the notation for this work. $\K$ will denote an algebraically closed field of characteristic zero unless explicitly mentioned otherwise. By a graded $\mathbb{K}-$algebra we mean a graded $\mathbb{K}-$algebra of finite type.

\subsection{Macaulay-Matlis duality}
We start by recalling the notion and results of  the well known Macaulay-Matlis duality. The interested reader can see the details in \cite{HMMNWW}.

Let $V$ be an $(n+1)-$dimensional $\K-$vector space and set $ \displaystyle R= \bigoplus_{i\geq 0} \textrm{Sym}^iV^{\ast}$ and
$ \displaystyle Q= \bigoplus_{i\geq 0} \textrm{Sym}^iV$. Let $\{x_0, \cdots, x_n\},\ \{X_0, \cdots, X_n\}$ be the dual bases of $V^{\ast}$ and $V$ respectively. So, we have the identification $R=\K[x_0,\cdots , x_n]$ and $Q=\K[X_0,\cdots , X_n]$. There are products
\begin{equation*}
\begin{array}{ccc}
	\textrm{Sym}^iV\otimes \textrm{Sym}^jV^{\ast} & \to &\textrm{Sym}^{j-i}V^{\ast}\\
	\alpha \otimes f &\mapsto & \alpha(f)
\end{array}    
\end{equation*}
This action gives $R$ the structure of $Q-$module. Over a field of zero characteristic, one can think in $R$ as a polynomial ring and $Q$ as the associated ring of differential operators via the identification $X_i=\frac{\partial}{\partial x_i}$, where $X_i(x_j)=\delta_{ij}$. \\
Let $I\subset Q$ be a homogeneous ideal, we define the {\em Macaulay's inverse system} $I^{-1}$ for $I$ as
\begin{equation*}
	I^{-1} :=\{ f \in R ~| ~ \ \alpha(f)=0 \textrm{ for all }\alpha \in I\}.
\end{equation*}
Then $I^{-1}$ is a $Q-$submodule of $R$ that inherits a grading of $R$. Conversely, If $M\subset R$ is a graded $Q-$submodule, then $$\textrm{Ann}(M):=\{ \alpha \in Q~| ~ \ \alpha(f)=0\ \textrm{for all } f \in M\}$$ is a homogeneous ideal in $Q$.

There is a one-to-one correspondence given by Macaulay-Matlis duality:

$$\begin{array}{ccc}
\{\text{homogeneous ideals of} \ Q\} & \leftrightarrow & \{\text{graded}\ Q-\text{submodules}\ \text{of}\ R \}\\
\operatorname{Ann}(M) & \leftarrow & M\\
I & \rightarrow & I^{-1}
  \end{array}
$$

Under this correspondence, $I^{-1}$ is a finitely generated $Q$-module if and only if $Q/I$ is an Artinian algebra.
It is important to notice that even in the case where $I^{-1}$ is not finitely generated as a $Q$-module, the graded piece $(I^{-1})_k=I_k^{-1}$ is a finite dimensional vector space.

For a given $f \in R_d$ define the Annihilator ideal
\begin{equation*}
\Ann_f = \{\alpha \in Q ~ | ~ \alpha(f)=0\}\subset Q.    
\end{equation*}

 We define $$A=\frac{Q}{\Ann_f}.$$
One can verify that $A$ is a standard graded Artinian Gorenstein $\K$-algebra such that $A_j=0$ for $j>d$ and such that $A_d \neq 0$ (see \cite[Section 1,2]{MW}).
We assume, without loss of generality, that $(\Ann_f)_1=0$.

Conversely, we get the following characterization of standard graded
Artinian Gorenstein $\K$-algebras.

\begin{theorem}{(Double annihilator Theorem of Macaulay)} \label{G=ANNF} \\
Let $R = \K[x_0,\cdots,x_n]$ and let $Q = \K[X_0,\cdots, X_n]$ be the associated ring of differential operators.
Let $A= \displaystyle \bigoplus_{i=0}^dA_i = Q/I$ be an Artinian standard graded $\mathbb K$-algebra. Then
$A$ is Gorenstein if and only if there exists $f\in R_d$
such that $A\simeq Q/\Ann_f$.
\end{theorem}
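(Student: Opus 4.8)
The plan is to read off both implications from the Macaulay--Matlis correspondence recalled above. The two elementary facts I would use throughout are the identity $\alpha(\beta(f))=(\alpha\beta)(f)$ for $\alpha,\beta\in Q$, $f\in R$ (so that $\Ann_f$ coincides with the annihilator of the cyclic $Q$-module $Q\cdot f$, the nonobvious inclusion being $\alpha(f)=0\Rightarrow\alpha(\beta f)=\beta(\alpha(f))=0$), and the fact that the graded piece $(I^{-1})_k$ equals the orthogonal complement $(I_k)^{\perp}\subseteq R_k$ under the apolarity pairing $Q_k\times R_k\to\K$ (which holds because $I$ is an ideal).

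For the implication recorded just before the statement I would argue quickly: given $0\neq f\in R_d$, the module $(\Ann_f)^{-1}=Q\cdot f$ is the finite-dimensional space spanned by the iterated partials of $f$, all of degree $\le d$, so $A=Q/\Ann_f$ is Artinian with $A_j=0$ for $j>d$ and $\dim_\K A_d=1$. If a homogeneous $\alpha\in Q_i$ with $i<d$ represented a socle element then $X_j\alpha\in\Ann_f$ for every $j$, i.e. all partials of $\alpha(f)$ vanish; since $\alpha(f)$ has positive degree this forces $\alpha(f)=0$, so $\alpha\in\Ann_f$. Hence $\operatorname{soc}(A)=A_d$ is one-dimensional and $A$ is Gorenstein.

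For the converse, suppose $A=Q/I$ is standard graded, Artinian and Gorenstein with $A_d\neq 0=A_{d+1}$, so $\operatorname{soc}(A)=A_d$ is one-dimensional and $I_d\subset Q_d$ is a hyperplane. I would pick $0\neq f\in (I^{-1})_d=(I_d)^{\perp}\subseteq R_d$, unique up to scalar, and claim $I=\Ann_f$. Since $\Ann_f=\Ann(Q\cdot f)$, it suffices to prove $Q\cdot f=I^{-1}$, for then $\Ann_f=\Ann(I^{-1})=I$. Being graded, this reduces to $(Q\cdot f)_i=(I_i)^{\perp}$ in $R_i$ for each $i$, equivalently (dualizing under $Q_i\times R_i\to\K$) to $\big((Q\cdot f)_i\big)^{\perp}=I_i$ in $Q_i$. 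Now $\alpha\in\big((Q\cdot f)_i\big)^{\perp}$ says $(\alpha\beta)(f)=\alpha(\beta(f))=0$ for all $\beta\in Q_{d-i}$; after identifying $A_d\xrightarrow{\ \sim\ }\K$ via $\bar\gamma\mapsto\gamma(f)$ (legitimate because the kernel of $\gamma\mapsto\gamma(f)$ on $Q_d$ is $(\K f)^{\perp}=I_d$), this means precisely that $\bar\alpha\in A_i$ lies in the left kernel of the multiplication pairing $A_i\times A_{d-i}\to A_d$. That kernel is zero, by the Poincar\'e duality valid for a graded Artinian Gorenstein algebra, so $\alpha\in I_i$; the reverse inclusion is clear, giving $Q\cdot f=I^{-1}$ and $I=\Ann_f$.

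The step I expect to be the main obstacle is precisely the appeal to Poincar\'e duality in the last paragraph: turning the hypothesis $\dim_\K\operatorname{soc}(A)=1$ into nondegeneracy of all the pairings $A_i\times A_{d-i}\to A_d$, equivalently into the statement that $I^{-1}$ is a \emph{cyclic} $Q$-module (its minimal number of generators equalling $\dim_\K\operatorname{soc}(A)$ by Matlis duality) whose generator is forced into degree $d$ because the $Q$-action on $R$ strictly lowers degree while $(I^{-1})_d\cong(A_d)^{\vee}\neq 0$. I would settle this either by citing the standard structure theory of graded Artinian Gorenstein algebras (as in \cite{HMMNWW,MW}) or by transporting the socle computation of the easy direction through the correspondence; everything else is formal manipulation with the apolarity pairing together with $\alpha(\beta(f))=(\alpha\beta)(f)$.
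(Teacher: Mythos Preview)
The paper does not actually supply a proof of this statement: immediately after the theorem it simply records that ``This result is well known and can be found in \cite[Theorem 2.1]{MW}.'' So there is nothing to compare your argument against at the level of details.

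That said, your proposal is correct and is essentially the standard proof via Macaulay--Matlis duality. The forward direction is fine (the one-dimensionality of $A_d$ follows from perfectness of the pairing $Q_d\times R_d\to\K$, which you use implicitly). In the converse, reducing $I=\Ann_f$ to $Q\cdot f=I^{-1}$ and then to the nondegeneracy of the pairings $A_i\times A_{d-i}\to A_d$ is exactly the usual route; you are right that Poincar\'e duality for graded Artinian Gorenstein algebras is the substantive input, and citing \cite{MW} or \cite{HMMNWW} for it is appropriate here. One small tightening: rather than phrasing the last step as ``transporting the socle computation of the easy direction through the correspondence,'' it is cleaner to argue directly that if $0\neq\bar\alpha\in A_i$ lies in the left kernel of $A_i\times A_{d-i}\to A_d$ then $\bar\alpha A_{d-i}=0$, and since $A_j=0$ for $j>d$ this forces $\bar\alpha$ into the socle, contradicting $\operatorname{soc}(A)=A_d$ when $i<d$. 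This is the self-contained version of Poincar\'e duality and removes the need for an external citation at that point.
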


This result is well known and can be found in \cite[Theorem 2.1]{MW}.

\begin{definition} \rm With the previous notation, let $A= \displaystyle \bigoplus_{i=0}^dA_i = Q/I$ be an Artinian Gorenstein $\K$-algebra with $I = \Ann_f$, $I_1=0$ and $A_d \neq 0$. The socle degree of $A$ is $\text{Soc}(A) = d$ which coincides with the degree of the form $f$.
 The codimension of $A$ is the codimension of the ideal $I \subset Q$ which coincides with its embedding dimension, that is, $\codim A = n$.
\end{definition}

\subsection{Lefschetz properties}

For a graded $\K$-algebra of  finite type $\displaystyle A = \bigoplus_{k=0} A_k$, the Hilbert function of $A$ is an integer function defined by 

$$\operatorname{Hilb}(A)(k) = \dim A_k=h_k$$.

In the case $A$ is an Artinian algebra, its Hilbert function has only finitely many non zero entries and sometimes it is called Hilbert vector of the algebra.  
We say that a graded Artinian $\K$-algebra $\displaystyle A = \bigoplus_{k=0}^d A_k$ is standard graded if it is generated as algebra in degree $1$.

We now recall the so-called Lefschetz properties for standard graded Artinian $\K$-algebras.

\begin{definition}\rm With the previous notation, for $\alpha \in A_k$ we define the $\K$-linear multiplication map $\times \alpha:A_s \to A_{k+s}$.
\begin{enumerate}
 \item[(i)] We say that $A$ has the Strong Lefschetz property (SLP) if there is $L \in A_1$ such that the $\K$-linear multiplication maps $\times L^{s-k}:A_k \to A_{s}$ have maximal rank for all $0\leq k<  s$. In this case $L$ is called a Strong Lefschetz element.
 \item[(ii)] We say that $A$ has the Weak Lefschetz property (WLP) if there is $L \in A_1$ such that the $\K$-linear multiplication maps $\times L:A_k \to A_{k+1}$ are of maximal rank for $k\geq 0$. In this case $L$ is called a Weak Lefschetz element.
\end{enumerate}
 
\end{definition}

\begin{remark}\rm \label{SLPimpliesWLPimplesUnimodality}
 It is easy to verify that SLP implies WLP; it is also true that WLP implies the unimodality of $\operatorname{Hilb}(A)$. Both converses are not true (see  \cite[Theorem 3.8]{Go}).\\
In the case in which $A$ is a standard graded Artinian Gorenstein $\K$-algebra, it is enough to verify the SLP condition in complementary degrees, that is, $\times L^{d-2k}:A_k \to A_{d-k}$ for all $k\leq d/2$ (see \cite{MW}). It is called SLP in the narrow sense. Moreover, in this case one can verify the WLP in the middle to conclude WLP in general (see \cite[Rmk. 1.4]{GZ2}).
\end{remark}

\begin{definition}
	Let $A$ be a standard graded $\K$-algebra of finite type over $\K$. We say that $A$ fails the SLP if there is $s<k$ such that
	the multiplication map $\times L^{k-s}:A_s \to A_k$ does not have maximal rank for every $L \in A_1$.
\end{definition}

\subsection{Laplace equations}

Let $X \subset \mathbb{P}^N$ be a quasi-projective variety of dimension $n$ and $x \in X$ a smooth point. We can choose a system of local coordinates around $x$ and a local parametrization of $x \in  \mathcal{U}\subset X$ of the form
\[\varphi: \Delta \to \mathcal{U}; \] where $\Delta$ is a multidisc, $x = \varphi(0, \cdots, 0)$ and the components of $\varphi(t_1 , \cdots , t_n )$ are power series.
The tangent space to $X$ at $x$ is the $\K$-vector space generated by the $n$ vectors that are the partial derivatives of $\varphi$ at $x$. Since $x$ is a smooth point of $X$, these $n$  vectors are $\K$-linearly independent.
It is equivalent to say that the Jacobian matrix has maximal rank.
This is the tangent space in the sense of differential geometry (see \cite{GH}).

Similarly, the $s$-th osculating (vector) space $T_x^s X$ is the span of all partial derivatives of order $\leq s$ (see \cite{GH}).
The expected dimension of $T_x^s X$ is
\[\mathrm{exp.dim} ~T_x^s X = \binom{n+s}{s}-1.\]
In general we have \[\dim ~T_x^s X \leq  \binom{n+s}{s}-1.\]
We say that $X$ satisfies $\delta$ Laplace equations if for a generic smooth point $x \in X$ we have
\[\dim T_x^s X =  \binom{n+s}{s} -1 - \delta. \]
Sometimes we also say that $X$ has an osculating defect in order $s$.

By definition, the dimension $\dim T_x^s X =\rank Jac^s_{\varphi} -1$ where $J^s_{\varphi}$ is the matrix of $s-$jets of $\varphi$.    

Let $I \subset Q$ be a homogeneous ideal and let $I^{-1}\subset R$ be its Macaulay inverse system. Let $k$ be a positive integer and assume that  $\dim I_k = r>0$, with $I_k=<F_1, \cdots, F_r>$. Associated with the linear system $|I_k^{-1}|$ of dimension $h_k - 1 = {{n+k}\choose{k} }-r-1$ there is a rational map:

$$\varphi_k: \mathbb{P}^n\dasharrow \mathbb{P}^{h_k-1}.$$

The closure of its image $X_k = \overline{ \varphi_{(I^{-1})_k}(\mathbb{P}^n)}\subset \mathbb{P}^{h^k-1} $ is the projection of the $n$-dimensional Veronese variety $V(k,n)$ from the linear system \[I_k = \langle F_1,\cdots, F_r\rangle\subset | \mathcal{O}_{\mathbb{P}^{n}}(k))|.\]

\begin{definition}
Let $n,s,k$ be positive integers with $s< k$, and  $\mathcal{L} \subset | \mathcal{O}_{\mathbb{P}^{n}}(k)|$ be a linear system of projective dimension $\dim \mathcal{L} = h_k-1$.
Let $\varphi = \varphi_{\mathcal{L}}$, $\varphi:\mathbb{P}^n \dashrightarrow \mathbb{P}^{h_k-1}$ be the associated rational map, given by
$\varphi(P)=(F_1(P):\cdots:F_{h_k}(P))$. Let $Q=\K[X_0,\cdots,X_n]$ be the ring of differential operators and let $Q_k=<\alpha_1, \cdots, \alpha_m>$.  The $s$-th Jacobian matrix associated to the map is $\Jac^{s}_{\varphi} = (\alpha_i(F_j))_{m\times h_k}$.
\end{definition}

\begin{remark}\label{remark:key}

Let $\varphi:\mathbb{P}^n \dashrightarrow \mathbb{P}^N$ be a rational map and let $X$ be its image. Let $x \in X$ be a generic smooth point. Suppose that $\dim A_s < N$. Then
\[\dim T_x^s X ~=  \rk~ \Jac^s_{\varphi} -1 \leq \binom{n+s}{s}-1.\]

When $X\subset \mathbb{P}^N$ satisfies $\delta$ Laplace equations of order $s$, then
\[\delta = \binom{n+s}{s}- \operatorname{rk} \Jac^s_{\varphi}. \]
\noindent We say that $X$ satisfies a Laplace equation of minimal order $s$, if $X$ satisfies one Laplace equation of order $s$ but does not satisfies any Laplace equations of order $k$ for $1 \leq k \leq s-1$.

Let $I \subset Q$ be a homogeneous ideal (not necessarily Artinian) and $I^{-1} $ its inverse system (not necessarily finitely generated as a $Q$-module).
Let $I^{-1}_k$ be the linear system defining $\varphi$, notice that it is finitely dimensional.
Let $c$ be the initial degree of $I$. If $s\geq c$ then we get some trivial Laplace equations for $X_d$ since we get $I_{s} \neq 0$, that is, there is $\alpha \in Q_{s}$ such that $\alpha(F_i) =0$ for all $i$.
\end{remark}

\begin{remark} \label{rmk:curves}\rm
Let $C \subset \mathbb{P}^N$ be a non-degenerate curve. If $s<N$ then $C$ does not satisfy a non trivial Laplace equation of order $s$. In fact, in \cite[1d]{GH} the authors show that for $s\leq N$, and $x \in X$ a general point we get
\begin{equation}\label{eq:curves}
\dim T^s_xX = s.    
\end{equation}

It is also trivial to show that for $s\geq N$, and $x \in X$ a general point we get \[ T^s_xX = \mathbb{P}^N.\]\\
In other words, the rank of $ \Jac^s_{\varphi}$ is always maximal.  
\end{remark}

\begin{example}\label{togliatti}
Let $I=(x^3,y^3,z^3,xyz)\in \mathbb{K}[x,y,z]$, consider its Macaulay inverse system given by $I^{-1}=(x^2y,x^2z, xy^2,xz^2, y^2z, yz^2)$. Associated with the linear system $|I_3^{-1}|$ we have the rational map:
$$\begin{array}{ccc}
 	\varphi:\mathbb{P}^2& \dasharrow &\mathbb{P}^5\\
 	(x:y:z)&\to &(x^2y:x^2z: xy^2:xz^2: y^2z: yz^2)
\end{array}$$
Let $X_3 = \overline{ \varphi(\mathbb{P}^2)}$. This surface is known as Togliatti Surface. It was first studied by Eugenio Togliatti, in \cite{T1} and \cite{T2}, where he proved that  osculating space do not have the expected dimension. Indeed,   at generic points $x\in X_3$, we have that

$$x^2\dfrac{\partial^2 \varphi}{\partial x^2}-xy\dfrac{\partial^2 \varphi}{\partial x \partial y}-xz\dfrac{\partial^2 \varphi}{\partial x \partial z}+y^2\dfrac{\partial^2 \varphi}{\partial y^2}-yz\dfrac{\partial^2 \varphi}{\partial y \partial z}+z^2\dfrac{\partial^2 \varphi}{\partial z^2}=0$$


\noindent is an order 2 Laplace equation for $X_3$.
\end{example}

As mentioned earlier, in \cite{MMR2016} the authors proved that the defectiveness of the osculating space in the case of  Example \ref{togliatti}, is related with the failure of WLP for the ideal $I=(x^3,y^3,z^3,xyz)\in \mathbb{K}[x,y,z]$. Indeed, for a generic linear form $L \in R_1$, it is easy to see that the map
$$\times L: (\mathbb{K}[x,y,z]/I)_2\to (\mathbb{K}[x,y,z]/I)_3$$
\noindent is neither injective nor surjective. In the next section, we will investigate this relationship further.

\section{The main result and its consequences}

In this section, we prove our main result that generalizes the main results of \cite{MMRO2013} (see Section \ref{TS}), \cite{GZ2} and  \cite{MW} (see Section \ref{GIH}). Moreover, as a consequence we reobtain SLP for codimension $2$ Artinian algebras first obtained in \cite{HMNW}.

Let $s,k$ be positive integers, with $s <k$. Let $I \subset Q$ be a homogeneous ideal generated by $r$ forms of arbitrary degree, $I^{-1}$ its Macaulay inverse system, $A=Q/I$, $L\in Q_1$ a general linear form and $P=L^{\perp}$ its dual. Consider $\varphi_k = \varphi_{I^{-1}_k}: \mathbb{P}^{n} \dashrightarrow \mathbb{P}^{h_k-1}$ the projection of the Veronese embedding by the linear system $|I^{-1}_k|$.

We define the {\em $s$th relevant Jacobian} of $\varphi_k$, to be the matrix $\widetilde{\Jac}^s_{\varphi_k}$ whose rows are given by a $\K$ linear basis of $A_s$. To be more precise, if $\varphi_k$ is defined by $\varphi_k=(F_1:\cdots:F_{h_k})$ and $A_s=<\gamma_1,\cdots,  \gamma_{h_s}>$, then $$\widetilde{\Jac}^s_{\varphi_k} = (\gamma_i(F_j))_{h_s \times h_k}.$$ 

For a point $P_0 \in \mathbb{P}^n$ we denote $\Jac^{s}_{\varphi_k}(P_0)$ the evaluation of the matrix $\Jac^{s}_{\varphi_k}$ in $P_0$.

Let $[\times L^{k-s}]$ be the matrix of the linear map $\times L^{k-s}: A_{s} \to A_k$ with respect to the ordered basis defining the Jacobian of order $s$ and the dual basis of $I^{-1}_k$.  

The following lemma sometimes called the differential Euler formula (see \cite[Chapter 7]{Ru}) will be needed in what follows.  

\begin{lemma}[Euler]\label{EulerLemma}
Let $d$ be positive integer, $F \in R_d$ and $L \in Q_1$ and $P=L^{\perp}$ then \[L^d\cdot F = d! \cdot F(P).\]
\end{lemma}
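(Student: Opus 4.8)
The plan is to prove the differential Euler formula $L^d \cdot F = d! \cdot F(P)$ for $F \in R_d$, $L \in Q_1$, and $P = L^\perp$ — that is, $P$ is the point of $\mathbb{P}^n$ dual to the hyperplane defined by $L$, or equivalently the linear form whose coefficient vector is the coefficient vector of $L$. I would first reduce to the case where $L$ is a coordinate form. Since everything is linear in a suitable sense and $\mathrm{GL}_{n+1}$ acts compatibly on both $Q$ and $R$ (the substitution action on $R=\K[x_0,\dots,x_n]$ and the dual action on $Q=\K[X_0,\dots,X_n]$), and since $P = L^\perp$ transforms correspondingly, it suffices to check the identity after a linear change of coordinates. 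Choosing coordinates so that $L = X_0$, we have $P = (1:0:\cdots:0)$, i.e. evaluation at $x_0 = 1$, $x_1 = \cdots = x_n = 0$.

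Next I would verify the identity for $L = X_0$ by checking it on the monomial basis of $R_d$, using linearity in $F$. For a monomial $x^a = x_0^{a_0}x_1^{a_1}\cdots x_n^{a_n}$ of degree $d$, apply $X_0^d = (\partial/\partial x_0)^d$. If $a_0 < d$ then at least one $x_i$ with $i\neq 0$ appears, so $X_0^d(x^a)$ still contains that variable (the operator $\partial_{x_0}^d$ only lowers the $x_0$-exponent), hence it either vanishes identically — when $a_0 < d$ and after differentiating $d$ times in $x_0$ we would need $a_0 \geq d$ to get a nonzero constant — or more precisely $X_0^d(x^a) = 0$ whenever $a_0 < d$, and correspondingly $x^a(P) = 0$ since some $x_i$ ($i \neq 0$) is evaluated at $0$. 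The only surviving monomial is $x_0^d$, for which $X_0^d(x_0^d) = d!$ and $x_0^d(P) = 1$, so both sides equal $d!$. By linearity this establishes $X_0^d \cdot F = d!\, F(P)$ for all $F \in R_d$.

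Finally I would assemble the equivariance argument carefully: given arbitrary $L \in Q_1$, pick $g \in \mathrm{GL}_{n+1}(\K)$ with $g \cdot X_0 = L$ (the induced action on $Q_1$), let $g$ act on $R$ by the contragredient so that the apolarity pairing $\langle \alpha, f \rangle = \alpha(f)$ is $\mathrm{GL}$-invariant and $(g\cdot\alpha)(g\cdot f) = \alpha(f)$; then $(g\cdot X_0)^d \cdot (g\cdot F) = g \cdot (X_0^d \cdot F)$ and one checks $(g\cdot F)(g\cdot P) = F(P)$ where $g \cdot P$ denotes the corresponding point, so $L^d \cdot F$ evaluated appropriately reduces to the coordinate case. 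I expect the main obstacle to be bookkeeping the two dual actions and the precise meaning of $P = L^\perp$ and the notation $F(P)$ (whether $F(P)$ means substitution of the coordinate vector of $L$ into $F$, which is the natural reading that makes the monomial computation work); once those conventions are pinned down the computation itself is routine. An alternative, coordinate-free route avoiding $\mathrm{GL}$-equivariance: write $L = \sum c_i X_i$ and $F$ in terms of the linear form $\ell_P = \sum c_i x_i$ and a complementary decomposition, expand $L^d$ by the multinomial theorem, and observe directly that $L^d(F)$ is the degree-$0$ part, which equals $d!$ times the coefficient extracted by evaluating at $P$; this may in fact be cleaner to write out than the group action argument.
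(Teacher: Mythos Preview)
Your proposal is correct. The paper does not actually supply a proof of this lemma; it simply records the statement and defers to \cite[Chapter 7]{Ru}. Your argument via reduction to $L=X_0$ and a monomial check is valid, and so is the alternative you sketch at the end. In fact the multinomial route is the shortest self-contained proof: writing $L=\sum_i c_iX_i$ and expanding $L^d=\sum_{|a|=d}\binom{d}{a}c^a X^a$, one uses $X^a(x^b)=a!\,\delta_{a,b}$ for $|a|=|b|=d$ to get, for $F=\sum_{|b|=d} f_b x^b$,
\[
L^d\cdot F=\sum_{|a|=d}\binom{d}{a}c^a\, a!\, f_a=d!\sum_{|a|=d} c^a f_a=d!\,F(c)=d!\,F(P).
\]
This sidesteps the equivariance bookkeeping you (rightly) flag as the main nuisance in the first approach, and it makes the meaning of $P=L^\perp$ and $F(P)$ completely transparent: $P$ is the point with coordinates $(c_0:\cdots:c_n)$ and $F(P)$ is substitution.
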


\begin{theorem}\label{cheesebread}
Let $I \subset Q$ be a homogeneous ideal such that $\dim I_k = r > 0$, $L\in Q_1$ a general linear form and $P=L^{\perp}$. Consider $$\varphi_k = \varphi_{I^{-1}_k}: \mathbb{P}^{n} \dashrightarrow \mathbb{P}^{h_k-1},$$ the projection of the Veronese embedding by the linear system $|I^{-1}_k|$. Then
\[  [\times L^{k-s}]^{tr} = (k-s)!\ \widetilde{\Jac}^{s}_{\varphi_k}(P).\]
\end{theorem}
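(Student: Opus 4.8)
The plan is to compute both sides of the claimed identity entrywise and match them. Fix the ordered basis $\gamma_1,\dots,\gamma_{h_s}$ of $A_s$ (classes of monomials, say) used to form $\widetilde{\Jac}^s_{\varphi_k}$, and fix the basis $F_1,\dots,F_{h_k}$ of $I^{-1}_k$ together with its dual basis of $A_k$ used to represent $\times L^{k-s}$. The $(j,i)$ entry of $[\times L^{k-s}]$ is the coefficient of the $j$-th dual basis vector in the expansion of $L^{k-s}\gamma_i \in A_k$; under the Macaulay--Matlis pairing $Q_k \times R_k \to \K$ this coefficient is precisely $(L^{k-s}\gamma_i)(F_j)$, i.e. the value of the differential operator $L^{k-s}\gamma_i \in Q_k$ applied to the form $F_j \in R_k$. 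So the $(i,j)$ entry of $[\times L^{k-s}]^{tr}$ equals $(L^{k-s}\gamma_i)(F_j) = \gamma_i\!\left(L^{k-s}(F_j)\right)$, using that the $Q$-action is an action of a commutative ring (so operators compose in either order) and that $\gamma_i$ has degree $s$ while $L^{k-s}$ has degree $k-s$, landing in $R_s$.

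Next I would invoke the differential Euler formula, Lemma~\ref{EulerLemma}, but applied in degree $k-s$ rather than $k$. Since $F_j \in R_k$ and $L \in Q_1$ with $P = L^\perp$, one has $L^{k-s}\cdot F_j \in R_s$; I claim this polynomial, evaluated via the dual pairing, is controlled by $P$. More precisely, for any $\gamma_i \in Q_s$ we have $\gamma_i(L^{k-s} F_j) = $ (up to the factorial constant) $\gamma_i(F_j)$ evaluated at $P$ — this is the natural ``partial'' version of Euler's lemma. To justify it cleanly, note $\gamma_i L^{k-s} \in Q_k$ and apply the reasoning that for a general $L$ one may take coordinates in which $L = X_0$ and $P = [1:0:\cdots:0]$; then $L^{k-s}$ is $\partial^{k-s}/\partial x_0^{k-s}$, and $\gamma_i L^{k-s}(F_j)$ is a mixed partial of $F_j$, homogeneous of degree $s$, whose value is obtained by the degree-$s$ Euler relation as $s!$ times its value at $P$ — but more efficiently, one applies Lemma~\ref{EulerLemma} directly to the degree-$s$ form $G := L^{k-s}F_j \in R_s$, combined with the observation that $\gamma_i(G)$ is a constant and $L^s \cdot G = s! \, G(P)$, while $L^s\cdot G = L^k \cdot F_j = k!\,F_j(P)$... so I need to be careful about which factorial appears. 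The correct bookkeeping: $\gamma_i(F_j)$ is a degree-$s$ form in $R$; evaluating the identity $[\times L^{k-s}]^{tr}_{ij} = \gamma_i(L^{k-s}F_j)$ and pushing the $L^{k-s}$ past $\gamma_i$ is not allowed (degrees), so instead write $[\times L^{k-s}]^{tr}_{ij} = (L^{k-s}\gamma_i)(F_j)$ and recognize $(k-s)! \cdot \big(\gamma_i(F_j)\big)(P) = (k-s)!\,\widetilde{\Jac}^s_{\varphi_k}(P)_{ij}$ by applying Euler's lemma to the degree-$(k-s)$ operator $L^{k-s}$ acting after $\gamma_i$ — i.e. $\gamma_i(F_j) \in R_s$, and $\big(\gamma_i(F_j)\big)$ has its order-$(k-s)$ behaviour governed by $L^{k-s}$ and $P$.

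The main obstacle, and the step I expect to need the most care, is exactly this factorial/degree bookkeeping: making precise the claim that applying $L^{k-s}$ to $F_j$ and then $\gamma_i$ yields $(k-s)!$ times the evaluation at $P$ of $\gamma_i(F_j)$, rather than some other factorial or a genuine polynomial. I would handle it by reducing to the model case $L = X_0$, $P = [1:0:\dots:0]$ (legitimate since $L$ is general and the statement is $\mathrm{GL}_{n+1}$-equivariant), where $\gamma_i(F_j)(x_0,0,\dots,0)$ picks out the coefficient of $x_0^s$ in $\gamma_i(F_j)$, and $\partial^{k-s}/\partial x_0^{k-s}$ applied to $F_j$ then differentiated by $\gamma_i$ and restricted to the $x_0$-axis produces precisely $\frac{k!}{s!}$... which must then be reconciled with the stated $(k-s)!$ by also accounting for how $\gamma_i$ itself is normalized — a monomial $\gamma_i = X^{\mathbf a}$ contributes its own multinomial factor that is absorbed on both sides since the same basis is used to build $\widetilde{\Jac}^s_{\varphi_k}$ and $[\times L^{k-s}]$. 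Once the model case is verified, equivariance propagates it to general $L$, and comparing all $(i,j)$ entries gives the matrix identity. I would close by remarking this immediately recovers the stated Theorem with $\widetilde{\Jac}$ in place of the ad hoc $(k-s)!$-rescaled map, since $\varphi_k$ is the projected Veronese by $|I^{-1}_k|$ and $h_k = \dim A_k$.
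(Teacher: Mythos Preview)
Your strategy is exactly the paper's: write the matrix entry of $[\times L^{k-s}]$ as $(L^{k-s}\gamma_i)(F_j)$ via the dual basis $\{\beta_\ell\}$ of $\{F_\ell\}$, and then invoke Euler's lemma to identify this with $(k-s)!\cdot\big(\gamma_i(F_j)\big)(P)$. But you make the key step much harder than it is, and the source of the trouble is a degree miscount. You write ``$\gamma_i(F_j)$ is a degree-$s$ form in $R$'' and later ``$\gamma_i(F_j)\in R_s$''; this is wrong. Since $F_j\in R_k$ and $\gamma_i\in Q_s$ acts by differentiation, one has $\gamma_i(F_j)\in R_{k-s}$. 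With this corrected, Lemma~\ref{EulerLemma} applied to the degree-$(k-s)$ form $G:=\gamma_i(F_j)$ gives in one line
\[
L^{k-s}\big(\gamma_i(F_j)\big)\;=\;(k-s)!\cdot\big(\gamma_i(F_j)\big)(P),
\]
and since $Q$ is commutative you may freely rewrite the left side as $(L^{k-s}\gamma_i)(F_j)=\big(\sum_\ell a_{\ell i}\beta_\ell\big)(F_j)=a_{ji}$, which is the $(j,i)$ entry of $[\times L^{k-s}]$. That is the whole computation; no coordinate reduction, no $\mathrm{GL}_{n+1}$-equivariance, and no reconciling of competing factorials is needed. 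Your worry that ``pushing $L^{k-s}$ past $\gamma_i$ is not allowed (degrees)'' is unfounded: $L^{k-s}$ and $\gamma_i$ are elements of the commutative ring $Q$, so $L^{k-s}\gamma_i=\gamma_i L^{k-s}$ as operators on $R$.
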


\begin{proof} Let $\{ F_1, \cdots, F_{h_k} \} \subset R$ be an ordered basis for $I^{-1}_k$ and let $\{\gamma_1, \cdots, \gamma_{h_s}\}$ be an ordered basis for $A_{s}$. From Macaulay-Matlis duality we can choose a basis $\{\beta_1, \cdots, \beta_{h_k}\}$ for $A_{k}$, such that
\begin{equation}\label{eq:duality} \beta_i( F_j) = \delta_{i,j}.\end{equation}
Moreover, for each $j = 1, \cdots, s$,  there exists $a_{ij} \in \K $, with $i = 1, \cdots, h_k$ such that
\begin{equation}
	L^{k-s}.\gamma_j = \sum_{i=1}^{h_k} a_{ij}\beta_i,
\end{equation}
that is, the matrix of the multiplication by $\times L^{k-s}$ is given by $[a_{ij}]_{(h_k)\times (h_s)}$.

On the other hand, note that the $(ji)-$ entry of $[\widetilde{\Jac}^{s}_{\varphi_k} ]$ is given by \linebreak $[\widetilde{\Jac}^{s}_{\varphi_k}]_{ji} = \gamma_j(F_i)$ and we have

\begin{equation}\label{proof}
\begin{array}{rcll}
	\gamma_j(F_i(P)) & =& \dfrac{1}{(k-s)!}\ L^{k-s}\gamma_j\cdot F_i&\textrm{from Lemma \ref{EulerLemma} }\\
 	& =&\displaystyle \dfrac{1}{(k-s)!}\ \left(\sum_{i=1}^{m}a_{ij}\beta_i\right)\cdot F_i& \textrm{from Equation \eqref{eq:duality}}\\
 	&=&\dfrac{1}{(k-s)!}\ a_{i,j}&
\end{array}
\end{equation}

\noindent Using equation \eqref{proof} for every $i = 1, \cdots, h_k$ and every $j = 1, \cdots, h_s$, the result follows.

\end{proof}

\begin{remark}\label{remark:rank}
	Let $\{\gamma_1,\cdots, \gamma_{h_s},\alpha_1,\cdots, \alpha_t\}$ be an ordered basis for $Q_{s}$, note that the $(ij)-$entry of $[\Jac^{s}_{\varphi_k} ]$ is given by $[\Jac^{s}_{\varphi_k}]_{ij} = \eta_j( F_i$), where \linebreak $\eta_j \in \{\gamma_1,\cdots, \gamma_{S},\alpha_1,\cdots, \alpha_t\}$. If $\eta_j \in \{\gamma_1,\cdots, \gamma_{h_s}\}$ we have $[\Jac^{s}(\varphi_{k})]_{ij} = \gamma_j(F_i)$ but if $\eta_j \in \{\alpha_1,\cdots, \alpha_t\}$ we have $\eta_j( F_i) =0$, that is, the last $t$ rows of the matrix  $[\Jac^{s}(\varphi_{k})]$ are zero. And we have

   \[ [\Jac^{s}(\varphi_{k})]=  \left[\begin{array}{c}
     	[\widetilde{\Jac}^{s}(\varphi_{k})] \\
      	0
	\end{array}\right]\]

	Therefore   \[\rk (\widetilde{\Jac}^s_{\varphi_k}) = \rk (\Jac^{s}_{\varphi_k}).\]
\end{remark}

\begin{corollary}\label{cor:codimension2}
Let $A$ be a standard graded Artinian $\K$-algebra of codimension two, then $A$ has SLP.
\end{corollary}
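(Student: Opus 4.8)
The plan is to show that a general linear form $L \in A_1$ is a strong Lefschetz element, i.e.\ that for every pair $s<k$ the multiplication map $\times L^{k-s}\colon A_s \to A_k$ has maximal rank. The decisive structural observation is that codimension two means exactly two variables: we may write $A=Q/I$ with $Q=\K[X_0,X_1]$, so here $n=1$, the ambient projective space is $\mathbb{P}^1$, and $\dim Q_s = s+1$. In particular $h_s=\dim A_s \le s+1$ for every $s$. No reduction to a general line is needed, because the varieties $X_k$ are already projections of a rational normal \emph{curve}, and this is precisely what makes Remark \ref{rmk:curves} available. The whole argument then reduces to computing a single rank.

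First I would translate the rank of the multiplication map into a Jacobian rank. By Theorem \ref{cheesebread} we have $[\times L^{k-s}]^{tr} = (k-s)!\,\widetilde{\Jac}^{s}_{\varphi_k}(P)$ with $P=L^{\perp}$, and since $(k-s)!\neq 0$ in characteristic zero this yields $\rk[\times L^{k-s}] = \rk \widetilde{\Jac}^{s}_{\varphi_k}(P)$. Because $L$ is general, $P=L^{\perp}$ is a general point of $\mathbb{P}^1$, so the value of the rank of the matrix of polynomials $\widetilde{\Jac}^{s}_{\varphi_k}$ at $P$ agrees with its generic rank $\rk \widetilde{\Jac}^{s}_{\varphi_k}$; by Remark \ref{remark:rank} this equals $\rk \Jac^{s}_{\varphi_k}$, which by definition of the osculating space is $\dim T^{s}_x X_k + 1$ at a general point $x \in X_k$.

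The geometric input enters now. The map $\varphi_k\colon \mathbb{P}^1 \dashrightarrow \mathbb{P}^{h_k-1}$ is defined by the linearly independent forms forming a basis of $I^{-1}_k$, so (for $h_k\ge 2$) its image $X_k$ is a non-degenerate curve in $\mathbb{P}^{h_k-1}$. By Remark \ref{rmk:curves} such a curve carries no non-trivial Laplace equation in any order, i.e.\ the rank of $\Jac^{s}_{\varphi_k}$ is always maximal; explicitly $\dim T^{s}_x X_k = \min(s,\,h_k-1)$, hence $\rk \Jac^{s}_{\varphi_k} = \min(s+1,\,h_k)$. Combining with the previous paragraph gives $\rk[\times L^{k-s}] = \min(s+1,\,h_k)$.

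It remains to recognize this as the maximal rank $\min(h_s,h_k)$. The transpose $[\times L^{k-s}]^{tr}$ has $h_s$ rows and $h_k$ columns, so $\rk[\times L^{k-s}] \le \min(h_s,h_k)$; on the other hand $h_s \le s+1$ forces $\min(h_s,h_k) \le \min(s+1,h_k)$. Thus
\[\min(h_s,h_k) \;\le\; \min(s+1,h_k) \;=\; \rk[\times L^{k-s}] \;\le\; \min(h_s,h_k),\]
so equality holds throughout and $\times L^{k-s}$ has maximal rank. As $s<k$ were arbitrary, a general $L$ is a strong Lefschetz element and $A$ has the SLP. The hard part will be the two legitimacy checks behind the clean chain above: that $X_k$ is genuinely a non-degenerate curve so that Remark \ref{rmk:curves} applies, and the passage from the value $\widetilde{\Jac}^{s}_{\varphi_k}(P)$ at the special point $P=L^{\perp}$ to the generic rank (this is exactly where genericity of $L$ is used). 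The degenerate low-dimensional cases $h_k\le 1$, where $X_k$ is empty or a single point, fall outside Remark \ref{rmk:curves} and must be dispatched directly, but there maximality of $\times L^{k-s}$ into a zero or one-dimensional target is immediate.
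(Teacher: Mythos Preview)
Your proof is correct and follows essentially the same route as the paper: reduce to $Q=\K[X_0,X_1]$, invoke Theorem~\ref{cheesebread} to identify $\rk[\times L^{k-s}]$ with the rank of the relevant Jacobian, and conclude maximality via Remark~\ref{rmk:curves} since $X_k\subset\mathbb{P}^{h_k-1}$ is a non-degenerate curve. The paper's version is much terser and leaves implicit the numerical comparison $\min(s+1,h_k)=\min(h_s,h_k)$ and the degenerate cases $h_k\le 1$ that you spell out.
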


\begin{proof} Since A is a  height two Artinian algebra, we can consider $A = Q/I$, where $Q=\mathbb{K}[X_0,X_1]$ and $I\subset Q$  an Artinian ideal generated by homogeneous polynomials of arbitrary degree. Let $k,s$ be positive integers, with $s<k$ and $L \in A_1$ a general linear form and consider the maps $\times L^{k-s}:A_s \to A_{k}$ and $\varphi_k:\mathbb{P}^1 \dashrightarrow C \subset \mathbb{P}^N$ the projection of the Veronese embedding by the linear system $|I^{-1}_k|$.

From Theorem \ref{cheesebread}, the rank of the map $\times L^{k-s}$ is the rank of the Jacobian $\widetilde{\Jac}^s_{\varphi_k}$ which is always maximal due to Remark \ref{rmk:curves}.  

\end{proof}

\begin{corollary}\label{gentea}
Let $d,k,s$ be positive integers, with $s<k$, $I\subset Q$ an ideal generated by homogeneous polynomials of arbitrary degrees, $d$ the lowest degree of the polynomials in $I$, $A=Q/ I$ and $L\in A_1$ a general linear form. Suppose that $\dim A_{s} \leq \dim A_k$ and $s<d$, then the linear map $\xymatrix {A_{s}\ar[r]^{\times L^{k-s}}& A_k}$ is not injective with kernel of dimension $\delta$ if and only if the variety $X_k$, the image of $\varphi_k:\mathbb{P}^n \dashrightarrow \mathbb{P}^{h_k-1}$ the projection of the Veronese embedding by the linear system $|I^{-1}_k|$, satisfies $\delta$ {\em non trivial} Laplace equation of order $s$.
\end{corollary}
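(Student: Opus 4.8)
The plan is to reduce Corollary \ref{gentea} to Theorem \ref{cheesebread} together with Remark \ref{remark:rank} and the dimension count for osculating spaces recorded in Remark \ref{remark:key}. First I would observe that, by Theorem \ref{cheesebread}, the transpose of the matrix $[\times L^{k-s}]$ equals $(k-s)!\,\widetilde{\Jac}^{s}_{\varphi_k}(P)$, and since $(k-s)!\neq 0$ in characteristic zero, the rank of $\times L^{k-s}\colon A_s\to A_k$ equals $\rk\widetilde{\Jac}^{s}_{\varphi_k}(P)$. Because $P=L^{\perp}$ for a \emph{general} linear form $L$, the point $P$ is a general point of $\mathbb{P}^n$, and it lies in the domain of definition of $\varphi_k$ (here the hypothesis $s<d$, with $d$ the initial degree of $I$, is used to guarantee that no form of $Q_s$ kills all the $F_i$, so that the evaluation at a general $P$ does not accidentally drop rank for a spurious reason); thus $\rk\widetilde{\Jac}^{s}_{\varphi_k}(P)$ is the generic rank of the relevant Jacobian, which by Remark \ref{remark:rank} equals the generic rank of $\Jac^{s}_{\varphi_k}$, i.e. $\dim T^{s}_x X_k+1$ for $x=\varphi_k(P)$ a general smooth point of $X_k$.

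Next I would combine the two sides of the count. The map $\times L^{k-s}\colon A_s\to A_k$ is not injective with kernel of dimension $\delta$ precisely when $\rk(\times L^{k-s})=h_s-\delta$, i.e. when $\rk\widetilde{\Jac}^{s}_{\varphi_k}(P)=h_s-\delta$. On the geometric side, $s<d$ forces $I_s=0$, hence $h_s=\dim A_s=\dim Q_s=\binom{n+s}{s}$; therefore $\rk\Jac^{s}_{\varphi_k}=h_s-\delta=\binom{n+s}{s}-\delta$. By the formula in Remark \ref{remark:key}, $X_k$ satisfies exactly $\delta=\binom{n+s}{s}-\rk\Jac^{s}_{\varphi_k}$ Laplace equations of order $s$. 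Since $s<d=c$, the initial degree of $I$, none of these is one of the trivial Laplace equations coming from elements of $I_s$ (there are none), so all $\delta$ of them are non-trivial. This establishes both implications simultaneously, the equivalence being an equality of two numbers each expressed through $\rk\Jac^{s}_{\varphi_k}$.

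It remains to address where the hypotheses $\dim A_s\le\dim A_k$ and $s<k$ enter. The inequality $h_s\le h_k$ is what makes ``not injective'' (rather than ``not surjective'' or ``not of maximal rank'') the correct phrasing: maximal rank of a map $A_s\to A_k$ with $h_s\le h_k$ means injectivity, so failure of injectivity by exactly $\delta$ is failure of maximal rank by exactly $\delta$, matching the osculating defect. The condition $s<k$ guarantees that the exponent $k-s$ of $L$ is positive so that the statement of Theorem \ref{cheesebread} applies, and together with $r=\dim I_k>0$ (automatic since $d\le k$ and $s<d\le k$ give $I_k\neq 0$) ensures $\varphi_k$ is a genuine projection of the Veronese.

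The main obstacle I anticipate is the transition ``$L$ general $\Rightarrow$ $P=L^{\perp}$ general $\Rightarrow$ $\rk\widetilde{\Jac}^{s}_{\varphi_k}(P)$ is the \emph{generic} rank of the matrix-valued function $\widetilde{\Jac}^{s}_{\varphi_k}$'': one must check that the locus where the rank drops below its generic value is a proper closed subset of $\mathbb{P}^n$ and that a general $P$ avoids both this locus and the base locus of $\varphi_k$. The first point is standard (rank is lower semicontinuous and the entries are polynomials), and the hypothesis $s<d$ is exactly what one needs for the generic rank to be attained at points of the domain rather than being artificially lowered; once this is in place the remaining steps are bookkeeping with Theorem \ref{cheesebread}, Remark \ref{remark:rank}, and Remark \ref{remark:key}.
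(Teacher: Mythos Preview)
Your proof is correct and follows essentially the same route as the paper's: apply Theorem \ref{cheesebread} to identify $\rk[\times L^{k-s}]$ with $\rk\widetilde{\Jac}^{s}_{\varphi_k}(P)$, use $s<d$ to get $\dim A_s=\binom{n+s}{s}$, invoke Remark \ref{remark:rank} to pass from $\widetilde{\Jac}^s$ to $\Jac^s$, and read off the number of Laplace equations via Remark \ref{remark:key}. One small inaccuracy in your peripheral commentary: the hypotheses $s<k$ and $s<d$ do \emph{not} force $d\le k$, so $I_k$ need not be nonzero; but when $I_k=0$ the map $\varphi_k$ is the full Veronese, both sides of the equivalence hold trivially (the multiplication map is injective and there is no osculating defect), and the argument is unaffected.
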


\begin{proof}
First note that, since $s<d$ one has
\begin{equation}\label{dimAs}
	\dim A_{s}= \binom{n+s}{s}.
\end{equation}

Therefore
\begin{equation}\label{delta}
\begin{array}{rcll}
	\delta & =& \dim \ker(\times L^{k-s})&\\
 & =& \dim A_s - \rk [\times L^{k-s}]& \\
	& =& \dim A_s - \rk \widetilde{\Jac}^s_{\varphi_k}&\textrm{from Theorem \ref{cheesebread}} \\
 	&=& \binom{n+s}{s} - \rk \widetilde{\Jac}^s_{\varphi_k}& \textrm{from Equation \eqref{dimAs}}\\
 	&=& \binom{n+s}{s}- \rk \Jac^s_{\varphi_k}& \textrm{from Remark \ref{remark:rank}}
\end{array}
\end{equation}

The result  follows from Remark \ref{remark:key}.
\end{proof}

Note that when $I\subset Q$ is an ideal generated by $r$ homogeneous polynomials $\{F_1, \cdots, F_r\}$ of degree $d$, $A_s=({Q/I})_s=Q_s$, then by Remark \ref{remark:rank} $\widetilde{\Jac}^{s}_{\varphi_d} =\Jac^{s}_{\varphi_d} $ for every positive integer $s$. By this observation, considering $k=d$ and $s=d-1$ on Theorem \ref{cheesebread} and Corollary \ref{gentea}, we can reobtain the celebrated Tea Theorem see \cite[Theorem 3.2]{MMRO2013}.

\begin{corollary}[The Tea-Theorem]\label{Tea}
Let $I \subset Q$ be an Artinian ideal generated by $r$ homogeneous polynomials $\{F_1, \cdots, F_r\}$ of degree $d$, with $r \leq {{n+d-1}\choose{n-1}}$.  Then the following conditions are equivalent:
\begin{itemize}
	\item[i)] The ideal $I$ fails the WLP in degree $d-1$;
	\item[ii)] The variety $X_d$ satisfies at least one Laplace equation of order $d-1$.
\end{itemize}
\noindent
\end{corollary}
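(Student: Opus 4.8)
The plan is to deduce the Tea Theorem as the special case $k=d$, $s=d-1$ of Corollary \ref{gentea}, after checking that the hypotheses of that corollary hold and that the numerical bound $r\le \binom{n+d-1}{n-1}$ is exactly what makes the two statements equivalent. First I would record the setup: since $I$ is generated in degree $d$, we have $(A)_j = Q_j$ for all $j < d$; in particular $A_{d-1} = Q_{d-1}$ has dimension $\binom{n+d-1}{d-1} = \binom{n+d-1}{n}$, while $\dim A_d = \binom{n+d}{d} - r$. The condition $r \le \binom{n+d-1}{n-1} = \binom{n+d-1}{d}$ is precisely the inequality $\dim A_{d-1} \le \dim A_d$: indeed $\binom{n+d}{d} - \binom{n+d-1}{d-1} = \binom{n+d-1}{d}$ by Pascal's rule, so $\dim A_{d-1}\le \dim A_d$ iff $r\le \binom{n+d-1}{d} = \binom{n+d-1}{n-1}$. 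Thus the bound on $r$ is exactly the hypothesis $\dim A_s \le \dim A_k$ of Corollary \ref{gentea}, and since $d$ is itself the lowest degree of a generator, the hypothesis $s = d-1 < d$ is also met.

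Next I would invoke Corollary \ref{gentea} with these values: it says that $\times L^{1}: A_{d-1}\to A_d$ fails to be injective, with kernel of dimension $\delta$, if and only if $X_d$ (the image of $\varphi_d$, the projection of the $d$-uple Veronese of $\mathbb{P}^n$ by the linear system $|I^{-1}_d|$) satisfies $\delta$ nontrivial Laplace equations of order $d-1$. It remains only to translate "$\times L: A_{d-1}\to A_d$ is not injective'' into "$I$ fails the WLP in degree $d-1$''. Because $\dim A_{d-1}\le \dim A_d$, a general multiplication map $\times L:A_{d-1}\to A_d$ has maximal rank if and only if it is injective; so failure of maximal rank in degree $d-1$ is the same as non-injectivity of $\times L$ in degree $d-1$ for general $L$, which is the definition of failure of the WLP in degree $d-1$. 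Hence (i) holds iff $\delta \ge 1$, i.e. iff $X_d$ satisfies at least one (nontrivial) Laplace equation of order $d-1$, which is (ii).

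One point that needs a word of care, and which I expect to be the only genuine subtlety, is the adjective \emph{nontrivial}. By Remark \ref{remark:key}, trivial Laplace equations arise only once $s$ reaches the initial degree $c$ of $I$; here $s = d-1 < d = c$, so no trivial Laplace equations of order $d-1$ exist, and every Laplace equation counted by $\delta$ is automatically nontrivial. This is precisely why the statement can be phrased without qualification in (ii). I would also note, via the observation preceding the corollary, that since $I$ is generated in a single degree $d$ we have $A_s = Q_s$ for $s = d-1$, so $\widetilde{\Jac}^{d-1}_{\varphi_d} = \Jac^{d-1}_{\varphi_d}$ and Remark \ref{remark:rank} is used with no loss; this makes the dimension count $\delta = \binom{n+d-1}{d-1} - \operatorname{rk}\Jac^{d-1}_{\varphi_d}$ in \eqref{delta} literally the osculating defect of $X_d$ in order $d-1$. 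Assembling these remarks gives the equivalence of (i) and (ii), completing the proof.
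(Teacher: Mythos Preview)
Your proof is correct and follows exactly the approach of the paper: the Tea Theorem is obtained by specializing Corollary \ref{gentea} to $k=d$, $s=d-1$, together with the observation (stated in the paragraph immediately preceding Corollary \ref{Tea}) that $\widetilde{\Jac}^{d-1}_{\varphi_d}=\Jac^{d-1}_{\varphi_d}$ because $A_s=Q_s$ for $s<d$. Your write-up is in fact more detailed than the paper's, since you explicitly verify via Pascal's rule that the bound $r\le\binom{n+d-1}{n-1}$ is equivalent to $\dim A_{d-1}\le\dim A_d$ and you spell out why the Laplace equations obtained are automatically nontrivial.
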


This theorem explains, for instance, the relation between the failure of WLP and the osculating defectiveness of the Togliatti surface in example \ref{togliatti}. Due to this,  we recall the following definitions, proposed in \cite[Definition 3.5]{MMRO2013}.

\begin{definition}\normalfont \label{TogliattiSystem}Let $I \subset Q$ be an Artinian ideal generated by $r$ forms  of degree $d$, and $r \leq {n+d-1\choose n-1}$. We will say that:
\begin{itemize}
\item[(i)] $I$ is a {Togliatti system} if it satisfies one of two equivalent conditions in Corollary \ref{Tea}.
\item[(ii)] $I$ is a {monomial Togliatti system} if, in addition, $I$ can be generated
by monomials.
\item[(iii)] $I$ is a {smooth Togliatti system} if, in addition, the rational variety $X_d$ is smooth.
\item[(iv)] A monomial Togliatti system $I$ is {minimal} if  there is no proper subset of the set of generators defining a monomial Togliatti system.
\item[(v)] The number of generators of a Togliatti system will be denoted by $\mu(I)$.

\end{itemize}
\end{definition}

We highlight that the condition on the numbers of generators of the ideal $I$ ensures that the Laplace equations found by the Tea Theorem are not trivial in the sense of Remark \ref{remark:key}.

It also should be noted that in the original statement of the Tea Theorem, there was a third equivalent condition on the linearity of the forms $\{F_1, \cdots, F_r\}$ restricted to a general hyperplane of $\mathbb{P}^{n}$. It was omitted here since it does not relate the Lefschetz properties of $I$ with the osculatory behavior of $X_d$,.

The Tea Theorem poses two important questions: Are the Laplace equations found of minimal order? Exactly how many Laplace equations do we have? We will concentrate on these two questions in the next section.

\section{Monomial Togliatti Systems}\label{TS}

In this section, we will prove that, for minimal monomial Togliatti systems, the Laplace equations found by Corollary \ref{gentea} and Corollary \ref{Tea}, have minimal order. Such a result opens the possibility of classifying higher dimensional toric varieties by its osculating behavior, in the spirit of \cite{BPT1992}, \cite{FKPT1985} and \cite{P2000}.

\iftrue

 Let $I$ be an Artinian ideal generated by monomials of degree $d$. In this case its associated variety $X_d$ is toric, hence it can be studied using combinatorial methods, as one can see in \cite{AAM2019}, \cite{MMR2016} and in what follows. Let $I^{-1}$ be its Macaulay inverse system and $A_I$ be the set of integral points corresponding to monomials in $I^{-1}$. We denote by $\Delta_{n}$ the standard $n$-dimensional simplex in the lattice $\mathbb{Z}^{n+1}$. We consider $d \Delta_n$ and define the polytope $P_I$ as the convex hull of the finite subset $A_I \subset \mathbb{Z}^{n+1}$. Additionally, we denote by $T^{s}X_d$ the $s-$osculating space of $X$ at a generic smooth point $P \in X_d$.

\begin{proposition}\label{polytope}
Let $I \subset Q $ be an Artinian ideal generated by $r$ monomials of degree $d$. Assume $r \leq {n+d-1 \choose n-1}$.
\begin{enumerate}
	\item[a)] Then $s\in \{1,\cdots, d-1\}$ is the least integer such that $X_d$ satisfies a Laplace equation of order $s$  if and only if there exists a hypersurface $F_{s}$ of degree $s$ containing $A_I \subset \mathbb{Z}^{n+1}$.
	\item[b)] Moreover, $I$ is a minimal Togliatti system if and only if any such hypersurface does not contain any integral point of $d \Delta_n \setminus A_I$ except possibly some of the vertices of $d \Delta_n$.

\end{enumerate}
\end{proposition}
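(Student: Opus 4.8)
The plan is to translate everything into the toric/combinatorial language and then invoke Corollary \ref{gentea} (equivalently, Remark \ref{remark:key}). Since $I$ is a monomial ideal generated in degree $d$, the graded piece $I^{-1}_d$ is spanned by the monomials $x^a$ with $a \in A_I \subset d\Delta_n \cap \mathbb{Z}^{n+1}$, and the rational map $\varphi_d$ is the monomial map whose coordinates are exactly these $x^a$; its image $X_d$ is the projective toric variety associated to the point configuration $A_I$. The key dictionary, which I would state as a preliminary lemma, is: a differential operator $\alpha \in Q_s$ annihilates \emph{all} the monomials $x^a$, $a \in A_I$ (equivalently, gives a row dependency producing a Laplace equation of order $s$ for $X_d$ in the sense of Remark \ref{remark:key}) if and only if, after the standard dehomogenization identifying $Q_s$ with polynomials of degree $\le s$ in $n$ variables evaluated on the lattice, the corresponding polynomial $F_s$ of degree $s$ vanishes at every point of $A_I$. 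This is the classical fact (going back to the Togliatti/Perkinson circle, cf. \cite{MMR2016, P2000}) that for a monomial map the osculating defect in order $s$ at a general point is measured by the space of degree-$\le s$ polynomial relations among the exponent vectors; I would prove it by writing $\widetilde{\Jac}^s_{\varphi_d}$ explicitly in the torus chart $x_0 = 1$ and observing that its $(j,i)$ entry $\gamma_j(x^{a_i})$ evaluated at a general point is, up to an invertible diagonal change, a monomial in the remaining variables whose exponent is an affine-linear image of $a_i$, so that row-rank deficiency is precisely a polynomial identity of degree $\le s$ on the $a_i$.

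With this dictionary in hand, part (a) is essentially immediate. By Corollary \ref{gentea} applied with $k=d$ (and here $A_s = Q_s$ since $s < d$ and $I$ starts in degree $d$), $X_d$ satisfies a Laplace equation of order $s$ iff $\times L^{d-s}\colon A_s \to A_d$ fails to be injective iff $\operatorname{rk}\Jac^s_{\varphi_d} < \binom{n+s}{s}$, and by the lemma the corank equals $\dim\{F \in \K[y_1,\dots,y_n]_{\le s} : F|_{A_I} = 0\}$. Hence $X_d$ has \emph{some} Laplace equation of order $s$ iff there is a hypersurface of degree $s$ through $A_I$, and "$s$ is the least such integer" corresponds exactly to "$s$ is the least order for which $X_d$ is osculating-defective", which is the definition of minimal order in Remark \ref{remark:key}. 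One should note here (as the paper already emphasizes for the Tea Theorem) that the hypothesis $r \le \binom{n+d-1}{n-1}$, i.e. $\dim A_d = \binom{n+d}{n} - r \ge \binom{n+d-1}{n-1} = \dim Q_{d-1}$, guarantees $\dim A_s \le \dim A_d$ so that Corollary \ref{gentea} applies and the equations are genuinely non-trivial (no degree-$s$ form lies in $I$).

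For part (b), the point is to characterize minimality of the Togliatti system, which by Definition \ref{TogliattiSystem}(iv) means no proper subset of the $r$ monomials is itself a Togliatti system. Removing a generator $x^b$ from $I$ corresponds, on the dual side, to \emph{adding} the lattice point $b \in d\Delta_n \setminus A_I$ to the configuration $A_I$, enlarging the linear system $|I^{-1}_d|$. By part (a), the resulting larger system $I' $ is still a Togliatti system (of the same order $s$, or in fact any order $\le d-1$) iff there is still a degree-$s$ hypersurface through the enlarged set $A_I \cup \{b\}$ — and by part (a) applied to $I'$, this is governed by whether the minimal-order defect persists. So $I$ is minimal iff for every $b \in d\Delta_n \cap \mathbb{Z}^{n+1} \setminus A_I$ the enlarged configuration $A_I \cup \{b\}$ fails to lie on any hypersurface of degree $\le d-1$ witnessing a defect, i.e. iff every hypersurface $F_s$ containing $A_I$ fails to contain $b$. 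The one subtlety to dispatch is the role of the vertices of $d\Delta_n$: these correspond to the pure powers $x_i^d$, which can never be removed from a monomial Togliatti system without violating Artinianness (one needs all of $x_0^d,\dots,x_n^d$, or their images, for $Q/I$ to be Artinian), so adding a vertex of $d\Delta_n$ back to $A_I$ is not an admissible "proper subset" operation; this is exactly why the statement allows $F_s$ to pass through some vertices of $d\Delta_n$. I would make this precise by recalling that $A_I \subseteq d\Delta_n$ always contains, or the complement never requires, the vertices, and then the equivalence in (b) follows by combining the removal$\leftrightarrow$addition correspondence with part (a).

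The main obstacle I anticipate is proving the dictionary lemma cleanly — i.e. making rigorous that, at a \emph{general} point of $X_d$, row-rank deficiency of $\widetilde{\Jac}^s_{\varphi_d}$ is \emph{exactly} the existence of a degree-$\le s$ polynomial vanishing on $A_I$, with the correct bookkeeping of which vanishing involves the vertices of $d\Delta_n$ (the homogeneous-vs-affine degree normalization). Everything after that is a formal consequence of Corollary \ref{gentea} and the removal/addition bijection between generators of $I$ and lattice points in $d\Delta_n \setminus A_I$.
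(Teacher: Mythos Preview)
Your proposal is correct and follows essentially the same route as the paper: the paper reduces to the point $P=(1,\ldots,1)$ via \cite[Prop.~2.2]{MMRN2011} and then invokes \cite[Prop.~1.1]{P2000} for exactly the ``dictionary lemma'' you plan to prove, obtaining a hypersurface of degree $\le s$ through $A_I$ and arguing by minimality of $s$ that the degree is exactly $s$; for part (b) the paper simply cites \cite[Prop.~3.4]{MMR2016}, whose content is precisely the removal/addition argument (with the Artinianness caveat about vertices) that you spell out. The only minor point to tighten in your write-up is to make explicit, as the paper does, that the dictionary a priori yields degree $\le s$ and that degree exactly $s$ then follows from the minimality of $s$.
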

\begin{proof}

  Let $I$ be as in the statement. By \cite[Proposition 2.2]{MMRN2011} to study the presence of WLP and SLP in $Q/I$, it is enough to verify if $L=X_0+ \cdots +X_n$ is a Lefschetz element for $Q/I$. Hence by \ref{cheesebread}, in order to study Laplace equations on $X_d$, that is, the dimension of $T^s_PX_d$ at a generic point $P \in X_d$, it is enough to consider $P=(1,\cdots, 1)$.
Thus, $T^s_PX_d$ does not have the expected dimension if and only if $\Jac^s_{\varphi}(P)$ does not have complete rank. By \cite[Proposition 1.1]{P2000} this is equivalent to say that the dimension of the linear space of polynomials in $n+ 1$ variables and degree $\leq s$ that are satisfied by the lattice points in $A_I$ is positive, i.e. there exists a hypersurface $F_s$ of degree at most $s$ containing $A_I$. We now proceed to prove that the degree of $F_s$ is $s$. Suppose, by contradiction, that the degree of $F_s$ is $k<s$, again by  \cite[Proposition 1.1]{P2000} we will have that $T^k_PX_d$ does not have the expected dimension, which contradicts the minimality of $s$, hence $\deg F_s = s$.
\\
 For the second part, see \cite[Proposition 3.4]{MMR2016}

\end{proof}
For $s=d-1$, the above proposition is the same as \cite[Proposition 3.4]{MMR2016}. {\color{black}Note that, in that case, if $X_d$ satisfies one Laplace equations of order $s$ smaller than $d-1$, then by the above Lemma, there is a hypersurface $F_s$, of degree $s$ containing $A_I \subset \mathbb{Z}^{n+1}$. Then $X_0^{d-1-s}\cdot F_s$ is a hypersurface of degree $d-1$ containing $A_I$}.

By Corollary \ref{Tea}, the $(d-1)-$osculating space of $X_d$ does not have the expected dimension. Next we will use Theorem \ref{cheesebread} and Proposition \ref{polytope} to investigate further the osculating behavior of $X_d$.

\begin{theorem}\label{order}
Let $I \subset Q$ be a minimal monomial Togliatti system of degree $d$ with $r$ generators. For $d \geq 3$, $X_d$ does  not satisfy Laplace equations of order $s$ for $1 \leq s \leq d-2$.
\end{theorem}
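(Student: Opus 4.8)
The plan is to convert the statement about osculating spaces of $X_d$ into a purely combinatorial statement about the lattice set $A_I\subset\mathbb Z^{n+1}$, and then to play it off against the minimality of $I$. Concretely, I would first record the equivalence underlying Proposition~\ref{polytope}: for $1\le s\le d-1$, the variety $X_d$ satisfies a Laplace equation of order $s$ if and only if there is a nonzero form of degree $\le s$ in $n+1$ variables vanishing on $A_I$. (This rests on Theorem~\ref{cheesebread}, on the reduction, valid because $I$ is monomial, to evaluating the Jacobian at $(1,\dots,1)$ via \cite[Proposition~2.2]{MMRN2011}, and on \cite[Proposition~1.1]{P2000}; it is essentially the content of the proof of Proposition~\ref{polytope}.) Since the initial degree of $I$ equals $d$, no Laplace equation of order $\le d-1$ is trivial in the sense of Remark~\ref{remark:key}, so the theorem is equivalent to the assertion that \emph{no nonzero form of degree $\le d-2$ vanishes on $A_I$}.

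Next I would argue by contradiction. Suppose a nonzero form $G$ of degree $\le d-2$ vanishes on $A_I$. Since $I$ is Artinian and generated in degree $d$, each pure power $X_i^d$ is a minimal generator of $I$; and because the monomial complete intersection $\K[X_0,\dots,X_n]/(X_0^d,\dots,X_n^d)$ has the SLP (see, e.g., \cite{St,Wa,HMMNWW}), it is not a Togliatti system, so $I$ must have a minimal generator $X^{\alpha}$ that is not a pure power, i.e.\ $\alpha$ is not a vertex of $d\Delta_n$. Pick a nonzero linear form $\ell$ with $\ell(\alpha)=0$ and put $H=\ell\cdot G$: this is a nonzero form of degree $\le d-1$ vanishing on $A_I\cup\{\alpha\}$. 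Let $I'$ be generated by the minimal generators of $I$ other than $X^{\alpha}$. One checks that $I'$ is again Artinian (it still contains every $X_i^d$), that it is generated by $r-1\le\binom{n+d-1}{n-1}$ forms of degree $d$, and that its inverse system in degree $d$ has lattice configuration $A_{I'}=A_I\cup\{\alpha\}$. Applying the equivalence above to $I'$, the form $H$ shows that the variety attached to $I'$ satisfies a Laplace equation of order $d-1$; by the Tea Theorem (Corollary~\ref{Tea}) this says $I'$ fails the WLP in degree $d-1$, so by Definition~\ref{TogliattiSystem} the ideal $I'$ is a monomial Togliatti system generated by a proper subset of the generators of $I$, contradicting the minimality of $I$.

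The hard part, I expect, is not any single deep step but the bookkeeping that makes the deletion argument airtight: one must verify that removing one non-pure-power generator enlarges $A_I$ by exactly one lattice point and preserves every hypothesis needed downstream — Artinianness, generation in a single degree $d$, and the bound $r-1\le\binom{n+d-1}{n-1}$ — so that both Proposition~\ref{polytope} and the Tea Theorem still apply to $I'$. The only genuinely external input is the SLP of the monomial complete intersection, used just to guarantee the existence of a non-pure-power generator. If one preferred a more hands-on route, one could instead start from Theorem~\ref{cheesebread}, rephrase the osculating defect as non-injectivity of $\times L^{d-s}\colon A_s\to A_d$, reduce to $L=X_0+\cdots+X_n$ by \cite[Proposition~2.2]{MMRN2011}, and try to analyze a monomial in the kernel; but the reduction to monomial kernel elements is delicate, which is why I favor the combinatorial argument.
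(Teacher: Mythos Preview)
Your proof is correct and follows essentially the same route as the paper: multiply the low-degree form vanishing on $A_I$ by a linear form through an extra lattice point of $d\Delta_n\setminus A_I$, and derive a contradiction with minimality. The paper packages the last step via Proposition~\ref{polytope}(b) rather than by explicitly constructing the sub-system $I'$, and it leaves implicit the existence of a non-vertex point of $d\Delta_n\setminus A_I$ (which you justify via the SLP of the monomial complete intersection), but the argument is the same.
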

\begin{proof}
Let $I$ and $X_d$, be as above. Suppose that $X_d$ satisfies at least one Laplace equation of order $s<d-1$. We can assume without loss of generality that $s=d-2$. By Proposition \ref{polytope} there exists a degree $d-2$ hypersurface $F_{d-2}$, containing $A_I$. Let $H$ be the equation of a hyperplane that passes through any point of $d \Delta_n \setminus A_I$. Clearly $F_{d-1} = H\cdot F_{d-2}$ is a degree $d-1$ hypersurface that passes through all points of $A_I$, includes points of  $d \Delta_n \setminus A_I$, which contradicts the minimality of $I$. Hence $s=d-1$ as claimed.

\end{proof}
\color{black}{}
It is interesting to note, that a main difference between the Tea Theorem (\cite[Theorem 3.2]{MMR2016}), Theorem \cite[Theorem 5.1]{GIV2014} and our Theorem \ref{cheesebread}, is that the later allows to understand not only the Laplace equations of a projective variety but also its inflection points (see Section \cite[Section 1]{P2000}). Let $\varphi:\mathbb{P}^n \dashrightarrow \mathbb{P}^N$ be a rational map and let $X$ be its image.  We will say that $X$ has an inflection point of order $s$ at $P\in X$ if
\[\dim T_P^s X ~=  \rk \Jac^s_{\varphi} -1 < \binom{n+s}{s}-1.\]
\noindent Note that $X$, obviously, could have inflection points of order $s$ even if it does not satisfy Laplace equations of order $s$, as the following example shows.

\begin{example}
Let $n,d \geq 4$ and consider the Togliatti system $I = (x_0^d,\cdots,x_n^d)+x_0^{d-2}x_1(x_0,\cdots,x_n)$, and the algebra $A = \mathbb{K}[x_0,\cdots,x_n]/I$. By \cite[Proposition 3.19]{MMR2016} one has that the associated toric variety $X_d$ is not smooth. Hence there is a point $P \in X_d$ such that $\Jac^1_{\varphi_{d}}(P)$ is not a full rank matrix, and the same holds for $\times[P^{\perp}]^{d-1}: A_{1} \to A_d$.
\end{example}

More interesting is the fact that in the context of  Theorem \ref{cheesebread} we can use the geometrical properties of the variety $X$ associated with the rational map given by the linear system $|I^{-1}|_k$ to determine
whether the multiplication by a power of an arbitrary linear form $[\times L^{k-s}]: A_s\to A_k$ has full rank, as we show in the following example.

\begin{example}
Let $n,d \geq 4$ and consider the Togliatti system $I = (x_0^d,\cdots,x_n^d)+x_0^{d-1}(x_0,\cdots,x_n)$, and the algebra $A = \mathbb{K}[x_0,\cdots,x_n]/I$. By \cite[Theorem 2.6]{AAM2019} one has that the associated toric variety $X_d$ is smooth. Hence for every  point $P \in X_d$, one has that $\Jac^1_{\varphi_{d}}(P)$ is a full rank matrix, so it is $\times[P^{\perp}]^{d-1}: A_{1} \to A_d$. This means that for every linear form $L \in A_1$, the multiplication map $\times[L]^{d-1}: A_{1} \to A_d$ has full rank.
\end{example}

Now, we move towards the problem of computing the exact number of Laplace equations that a minimal monomial Togliatti system satisfies.

\begin{proposition}
	Let $n,d \geq 4$ and consider the Togliatti system $I = (x_0^d,\cdots,x_n^d)+x_0^{d-1}(x_0,\cdots,x_n)$. Then the associated variety $X_d$ satisfies exactly one Laplace equation of order $d-1$.
\end{proposition}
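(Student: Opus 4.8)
The plan is to turn the statement into a kernel computation and then evaluate that kernel by restricting to a general hyperplane. Here $I=(x_0^d,\dots,x_n^d)+x_0^{d-1}(x_0,\dots,x_n)$ is monomial with $\mu(I)=2n+1$ generators, namely $X_0^d,\dots,X_n^d$ together with $X_0^{d-1}X_1,\dots,X_0^{d-1}X_n$, and its initial degree is $d$. I would invoke Corollary~\ref{gentea} with $k=d$ and $s=d-1$: its hypotheses hold, since $d-1<d$ and
\[
\dim A_{d-1}=\binom{n+d-1}{d-1}\ \le\ \binom{n+d}{d}-(2n+1)=\dim A_d ,
\]
the inequality being equivalent to the Togliatti bound $2n+1\le\binom{n+d-1}{n-1}$, which holds for $n,d\ge 4$. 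Since order $d-1$ is below the initial degree of $I$, there are no trivial Laplace equations of that order, so the corollary says that $X_d$ satisfies exactly $\delta:=\dim\ker\bigl(\times L\colon A_{d-1}\to A_d\bigr)$ Laplace equations of order $d-1$ for a general $L$; because $I$ is monomial we may take $L=X_0+X_1+\dots+X_n$ by \cite[Proposition~2.2]{MMRN2011}. Thus it suffices to prove $\delta=1$.

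First I would record the bound $\delta\ge 1$: since $X_0^{d-1}L=X_0^d+\sum_{j=1}^n X_0^{d-1}X_j$ is a sum of generators of $I$, the nonzero class of $X_0^{d-1}$ in $A_{d-1}=Q_{d-1}$ lies in $\ker(\times L)$ (this is just the failure of WLP in degree $d-1$). For the reverse bound, note that $A_{d-1}=Q_{d-1}$ and that $\times L\colon Q_{d-1}\to Q_d$ is injective ($Q$ is a domain), so $\delta=\dim\bigl(L\,Q_{d-1}\cap I_d\bigr)$. Put $\sigma=X_1+\dots+X_n$ and restrict to the hyperplane $\{L=0\}$ by substituting $X_0=-\sigma$; this identifies $Q_d/L\,Q_{d-1}$ with $\K[X_1,\dots,X_n]_d$, and the $2n+1$ monomial generators of $I_d$ are sent respectively to $(-1)^d\sigma^d$, to $X_i^d$ for $1\le i\le n$, and to $(-1)^{d-1}\sigma^{d-1}X_j$ for $1\le j\le n$. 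Since $\ker\bigl(Q_d\to\K[X_1,\dots,X_n]_d\bigr)=L\,Q_{d-1}$, we get $\delta=(2n+1)-\dim_{\K}V$, where $V\subseteq\K[X_1,\dots,X_n]_d$ is the subspace spanned by $\sigma^d,X_1^d,\dots,X_n^d,\sigma^{d-1}X_1,\dots,\sigma^{d-1}X_n$. The single relation $\sigma^d=\sum_{j=1}^n\sigma^{d-1}X_j$ gives $\dim V\le 2n$, so it remains to prove $\dim V=2n$, i.e. that the $2n$ forms $X_1^d,\dots,X_n^d,\sigma^{d-1}X_1,\dots,\sigma^{d-1}X_n$ are $\K$-linearly independent.

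This linear independence is the step I expect to be the crux, and it is where the hypothesis $d\ge 4$ (in fact only $d\ge 3$) is used. Suppose $\sum_{j=1}^n a_jX_j^d+\sigma^{d-1}\ell=0$ with $\ell=\sum_j b_jX_j$ linear; then $\sigma^{d-1}$ divides $\sum_{j=1}^n a_jX_j^d$ in $\K[X_1,\dots,X_n]$. I would exclude this by restricting to two variables at a time: setting all variables except $X_1,X_j$ to zero, divisibility of $a_1X_1^d+a_jX_j^d$ by $(X_1+X_j)^{d-1}$, after the substitution $X_j=-X_1+t$, forces the coefficients of $t^0,\dots,t^{d-2}$ in $a_1X_1^d+a_j(-X_1+t)^d$ to vanish; the coefficient of $t^1$ is $a_j\,d\,(-1)^{d-1}X_1^{d-1}$, hence $a_j=0$, and then the $t^0$-coefficient gives $a_1=0$. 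Letting $j$ range over $2,\dots,n$ yields $a_1=\dots=a_n=0$, whence $\sigma^{d-1}\ell=0$ and so $\ell=0$. This proves $\dim V=2n$, hence $\delta=(2n+1)-2n=1$, which is the claim. As a byproduct the argument identifies $\ker(\times L\colon A_{d-1}\to A_d)=\K\cdot X_0^{d-1}$ and pins down the unique Laplace equation of order $d-1$ of $X_d$; one could alternatively run the whole count through Proposition~\ref{polytope}, showing that the space of degree-$\le(d-1)$ hypersurfaces through $A_I$ is one-dimensional.
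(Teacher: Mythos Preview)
Your proof is correct and shares the same reduction as the paper's: both invoke Corollary~\ref{gentea} with $k=d$, $s=d-1$, specialize to $L=X_0+\cdots+X_n$ via \cite[Proposition~2.2]{MMRN2011}, and translate the claim into $\dim\ker(\times L\colon A_{d-1}\to A_d)=1$. Where you diverge is in the actual kernel computation. The paper writes a general $f=\sum_{i\vdash(d-1)}a_i x^i$, expands $Lf$, records for each partition $i\vdash d$ the linear equation $\sum_{j\in J_i}a_j=0$, observes that only the coefficient $a_{(d-1,0,\dots,0)}$ is unconstrained (because every monomial of $X_0^{d-1}L$ already lies in $I$), and then asserts that solving this system gives $\ker(\times L)=\langle x_0^{d-1}\rangle$; the resolution of the linear system is not written out. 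You instead pass to the quotient by $L$: identifying $LQ_{d-1}\cap I_d$ with the kernel of the restriction of the $2n+1$ monomial generators to $\K[X_1,\dots,X_n]_d$ under $X_0\mapsto -\sigma$, you reduce to proving the $2n$ forms $X_1^d,\dots,X_n^d,\sigma^{d-1}X_1,\dots,\sigma^{d-1}X_n$ are independent, which you handle by a clean two-variable divisibility argument. Your route is more structural, makes the step that actually uses $d\ge 3$ explicit, and carries out in full the linear-algebra verification that the paper leaves implicit; the paper's approach, in turn, stays closer to the bare combinatorics of monomials and avoids the auxiliary restriction.
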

\begin{proof}
	By Theorem \ref{order}, $X_d$ will only have Laplace equations of order $d-1$. By
	\cite[Proposition 2.2]{MMRN2011}, we only need to study the osculating behaviour of $X_d$ at $P=L^{\perp}$, where $L = x_0+\cdots+x_n$, and by Corollary \ref{gentea}, the number of Laplace equations that $X_d$ satisfies is equal to the dimension of the kernel of $\times[L]: A_{d-1} \to A_d$.
    
	Let $f = \displaystyle \sum_{i \vdash (d-1)} a_{i}x^i$, where $i \vdash (d-1)$ means that $i=(i_0,\cdots, i_n)$ is a partition of $d-1$, and $x^i$ means $x_0^{i_0}x_1^{i_1}\cdots x_n^{i_n}$. Clearly, $f \in A_{d-1}$. Suppose that $f \in \ker (\times[L])$ and assume that $f\neq 0$.
    
	For each $i=(i_0,\cdots,i_n)\vdash d$, let $J_i$  be the set of $(n+1)-$uples of the form $(j_0,\cdots,j_n)$ such that for each $(j_0,\cdots,j_n)$ there exists only one $k$ such that $j_k+1=i_k$, with $j_k\geq 0$,  and $j_s=i_s$ for all other values of $s=1,\cdots,\hat{k},\cdots,n$. Note that
    
	$$(x_0+\cdots+x_n)\cdot f = \displaystyle \sum_{i \vdash d}(\sum_{j \in J_i}a_j)x^i$$

Note, for instance, that using the above notation, we see that the coefficient of
$x_1^{d-1}x_2$ is $(a_{(0,d-2,1,\cdots,0)}+a_{(0,d-1,0,\cdots,0)}$ and the coefficient of $x_1^{d-2}x_2x_3$ is $(a_{(0,d-2,1,\cdots,0)}+a_{(0,d-3,0,1,\cdots,0)}+a_{(0,d-3,1,1,\cdots,0)})$.

From the condition $(x_0+\cdots+x_n)\cdot f \in I$, and the fact that  $f \in \ker (\times[L])$, we get the following linear system:

\begin{equation}\label{linearalgebra}
	(\sum_{j \in J_i}a_j)=0,
\end{equation}
\noindent for every $i \vdash d$, except for $i=(d,0,\cdots,0)$, since $I = (x_0^d,\cdots,x_n^d)+x_0^{d-1}(x_0,\cdots,x_n)$, hence the term $(x_0+\cdots+x_n)\cdot a_{(d-1,0,\cdots,0)}x_0^{d-1}$ will lie in $I$ making $a_{(d-1,0,\cdots,0)}$ that corresponds to the coefficient of $x_0^{d-1}$ in $f$ to be free.

Solving the linear system \ref{linearalgebra} we get that $a_j = 0$ for every $j \in J_i$ and every $i \vdash d$ that give us that $\ker (\times[L]) =\langle x_0^{d-1} \rangle$. By the Corollary \ref{gentea} we have that $X_d$ satisfies precisely one Laplace equation.
\end{proof}
Using Macaulay2, we computed the dimension of the kernel resulting from the multiplication by $(x_0+\cdots+x_n)$ in explicit examples of minimal monomial Togliatti systems as described in \cite[Section 4]{AAM2019} and \cite[Section 4]{MMR2016}. Remarkably, we observed that these systems satisfy precisely  one Laplace equation of order $d-1$. Although the proof presented here heavily relies on the specific form of the Togliatti system, establishing this fact seems beyond the scope of our current techniques since a general classification for monomial Togliatti systems is lacking. Nevertheless, the significant number of examples in which the claim holds leads us to formulate the following conjecture.

\begin{conjecture}
Every minimal monomial Togliatti system of degree $d$ satisfies precisely one Laplace equation of order $d-1$.
\end{conjecture}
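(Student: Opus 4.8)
The plan is to deduce the statement from Corollary \ref{gentea} together with the minimality criterion Proposition \ref{polytope}(b). Since a monomial Togliatti system $I$ of degree $d$ is generated in the single degree $d$, one has $A_{d-1}=Q_{d-1}$, so the inequality $\dim A_{d-1}\le\dim A_d$ is exactly the numerical bound $r\le\binom{n+d-1}{n-1}$, and $s=d-1<d$ is below the initial degree; hence, using \cite[Proposition 2.2]{MMRN2011} to take $L=x_0+\cdots+x_n$, Corollary \ref{gentea} shows that the number of Laplace equations of order $d-1$ satisfied by $X_d$ equals $\dim\ker(\times L\colon A_{d-1}\to A_d)$. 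Combining Theorem \ref{cheesebread} with \cite[Proposition 1.1]{P2000} exactly as in the proof of Proposition \ref{polytope}, this kernel has the same dimension as the space $V$ of polynomials of degree at most $d-1$ in $n+1$ variables vanishing on the lattice configuration $A_I\subset\mathbb{Z}^{n+1}$. Since $I$ is a Togliatti system, the Tea Theorem (Corollary \ref{Tea}) gives $\dim V\ge 1$, so the whole content of the conjecture is the bound $\dim V\le 1$. (By Theorem \ref{order} these are moreover the only nontrivial Laplace equations of $X_d$, those of order $\ge d$ being trivial in the sense of Remark \ref{remark:key}.)

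To prove $\dim V\le 1$, I would first produce a \emph{non-vertex} lattice point of $d\Delta_n\setminus A_I$. Because $I$ is Artinian and generated in degree $d$, each pure power $x_0^d,\dots,x_n^d$ is a minimal generator; if these were all the generators, $I$ would be a monomial complete intersection, which has the SLP and hence is not a Togliatti system. So $I$ has at least one further generator, a degree-$d$ monomial that is not a pure power, and its exponent vector $\nu$ is a non-vertex point of $d\Delta_n$ with $\nu\notin A_I$ (recall $A_I$ is precisely $d\Delta_n\cap\mathbb{Z}^{n+1}$ with the generator exponents deleted). Now suppose $G_0,G_1\in V$ are linearly independent. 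By Proposition \ref{polytope}(b), no hypersurface of degree $\le d-1$ through $A_I$ passes through the non-vertex point $\nu$, so $G_0(\nu)\ne 0$ and $G_1(\nu)\ne 0$; but then $G:=G_1(\nu)\,G_0-G_0(\nu)\,G_1$ is a nonzero element of $V$ with $G(\nu)=0$, contradicting Proposition \ref{polytope}(b). Hence $\dim V\le 1$, so $\dim V=1$, which is exactly the assertion; in particular this reproves, uniformly, the explicit calculation for $I=(x_0^d,\dots,x_n^d)+x_0^{d-1}(x_0,\dots,x_n)$ carried out in the proposition preceding the conjecture, where the unique Laplace equation corresponds dually to the kernel vector $x_0^{d-1}$.

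The step I expect to demand the most care is the identification $\dim\ker(\times L\colon A_{d-1}\to A_d)=\dim V$: one must extract from \cite[Proposition 1.1]{P2000} (via \cite[Proposition 2.2]{MMRN2011} and Theorem \ref{cheesebread}) not merely the dichotomy "the osculating space is defective iff some polynomial vanishes on $A_I$", which is what the proofs of Proposition \ref{polytope} and Theorem \ref{order} use, but the exact equality of dimensions, keeping careful track of the affine-versus-homogeneous bookkeeping for polynomials on $A_I$. Everything else — the existence of a non-vertex generator and the one-line pencil argument excluding a second independent hypersurface — is elementary, so once the count in the first step is granted the argument is complete. A secondary point worth double-checking is that Proposition \ref{polytope}(b), cited from \cite[Proposition 3.4]{MMR2016}, is available in the "for every degree-$(d-1)$ hypersurface through $A_I$" form used above, and not only for a single distinguished one.
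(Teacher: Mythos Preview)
The statement you are attacking is a \emph{conjecture} in the paper, not a theorem: the authors explicitly write that ``establishing this fact seems beyond the scope of our current techniques since a general classification for monomial Togliatti systems is lacking'', and they give only computational evidence together with the single worked family $I=(x_0^d,\dots,x_n^d)+x_0^{d-1}(x_1,\dots,x_n)$. So there is no proof in the paper to compare your proposal against.

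That said, your argument is not merely a restatement of their evidence --- it is a genuine proof strategy, and the core step is sound. Once the identification $\delta=\dim V$ is in place, the evaluation map $\mathrm{ev}_\nu\colon V\to\K$ at a non-vertex lattice point $\nu\in d\Delta_n\setminus A_I$ (which exists because a Togliatti system has at least one non-pure-power generator) must be injective: any nonzero $G\in\ker(\mathrm{ev}_\nu)$ is a degree-$(d-1)$ hypersurface through $A_I\cup\{\nu\}$ (its degree cannot drop below $d-1$, by Theorem~\ref{order}), and then removing $x^\nu$ from the generators yields a strictly smaller Artinian monomial ideal that is still a Togliatti system, contradicting minimality. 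This is exactly the contrapositive of Proposition~\ref{polytope}(b), and the paper itself already uses that proposition in the ``for every such hypersurface'' reading in the proof of Theorem~\ref{order}; so your secondary worry is not an obstacle. Injectivity of $\mathrm{ev}_\nu$ forces $\dim V\le 1$, and the Tea Theorem gives $\dim V\ge 1$.

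Your primary worry --- upgrading the dichotomy in \cite[Proposition~1.1]{P2000} to the exact equality $\dim\ker(\times L)=\dim V$ --- is the only place requiring care, but it is routine: Corollary~\ref{gentea} already gives $\delta=\dim\ker(\times L)$ as the \emph{number} of order-$(d-1)$ Laplace equations, and passing to torus coordinates $x_i=e^{u_i}$ turns the jet matrix at $(1{:}\cdots{:}1)$ into the matrix $(\bar a_j^{\alpha})$, whose left kernel is precisely $V$. In short, the authors prove nothing here, and your pencil/evaluation argument, once these bookkeeping points are written out, appears to resolve the conjecture they left open.
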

\color{black}{}

\section{Geometric Interpretation of Hessians}\label{GIH}

In this section we will study the geometric meaning of mixed and higher order hessians of a form $f$ of degree $d$ (see \cite{DGI}, \cite{GZ2}, \cite{MW} for more details).

\begin{definition}
 The $k$-th order Jacobian ideal of $f \in R$ is
\[J^k =J^k(f)= (Q_k \cdot  f) = (A_k \cdot f).\] It is the ideal generated by the $k$-th order partial derivatives of $f$.

The $k$-th polar mapping (or $k$-th gradient mapping) of the hypersurface $X=V(f) \subset \mathbb{P}^n$ is the rational map  $\Phi^k_X:  \mathbb{P}^n  \dashrightarrow   \mathbb{P}^{h_k-1}$ given by the linear system of $k$-th order partial derivatives of $f$ in $A_k$. The $k$-th polar image of $X$ is  $X_k = \overline{\Phi^k_X(\mathbb{P}^n)}\subseteq \mathbb{P}^{\binom{n+k}{k}-1}$, the closure of the image of the $k$-th polar map.
\end{definition}

\begin{lemma} \label{easy} Let $A = Q/I$ be an AG algebra with $I =\mathrm{Ann}~f$, $f \in R_d$ and $e<d$. Then
\[I^{-1}_k = J_k^{d-k}.\]

\end{lemma}

\begin{proof} Let $A_{d-k} = <\alpha_1, \cdots, \alpha_s>$, then
$J^{d-k} =(\alpha_1(f), \cdots, \alpha_s(f))$. Therefore,
$J^{d-k}_k =<\alpha_1(f), \cdots, \alpha_s(f)>\subset R_k$. By Poincaré duality, let $A_d = <\theta>$ and let $A_k = <\alpha_1^*, \cdots, \alpha_s^*>$  such that $\alpha_i\alpha_j^*=\delta_{ij}\theta$. Consider the lifting of $\alpha^*_i$ to $Q$ and complete  basis to $Q_{k} = <\alpha^*_1, \cdots, \alpha^*_s, \beta_1, \cdots, \beta_l>$ with
$I_{k} = <\beta_1, \cdots, \beta_l>$. Let $\alpha_i(f) \in J_k^{d-k}$, then we get $\alpha_i(f) \in I_k^{-1}$. Indeed, since $\beta_j(f)=0$ we get \[\beta_j(\alpha_i(f)) = \alpha_i(\beta_j(f))=\alpha_i(0)=0. \]
Therefore, $J_k^{d-k} \subset I_k^{-1}$ and, since they have the same dimension, the result follows.
\end{proof}

Theorem \ref{cheesebread} allows us to study the rank of the multiplication map in terms of the projection of the Veronese variety of the linear system given by the Macaulay inverse of an ideal $I$ not necessarily generated by forms of the same degree, we have the following result. The first equivalence is just a reinterpretation of  \cite[Theorem 2.4]{GZ2}.

\begin{theorem}\label{littlebread}
Let $A = Q/I$ be a standard graded Artinian Gorenstein $\K$-algebra such that $I = \Ann_f $ with $f \in R_d$. Let $0<s<k<d$ be integers and suppose that $\dim A_s \leq \dim A_k$. Let $L \in A_1$ be a linear form and $P=L^{\perp}\in \mathbb{P}^n$ be the corresponding point.
Then the following assertions are equivalent.   
\begin{enumerate}
	\item The $\K$-linear map $\times L^{k-s}: A_s \to A_k $ is not injective;  
	\item The mixed Hessian $\Hess_f^{s,d-k}(P)$ is not of maximal rank;
	\item The variety $X_k$ has osculating defect in order $s$.
\end{enumerate}
\end{theorem}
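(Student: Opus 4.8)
The plan is to establish the two equivalences separately, using the machinery already in place. For the equivalence (1) $\Leftrightarrow$ (3), I would simply invoke Theorem \ref{cheesebread} together with Lemma \ref{easy}. Indeed, since $A = Q/\Ann_f$ is Artinian Gorenstein with socle degree $d$ and $k < d$, Lemma \ref{easy} identifies the linear system $I^{-1}_k$ with the relative polar system $J^{d-k}_k$, so the projected Veronese variety $X_k$ appearing in Theorem \ref{cheesebread} is exactly the $k$-th polar image of $X = V(f)$. By Theorem \ref{cheesebread} the transpose of the matrix of $\times L^{k-s} \colon A_s \to A_k$ equals $(k-s)!\,\widetilde{\Jac}^s_{\varphi_k}(P)$, and by Remark \ref{remark:rank} the rank of $\widetilde{\Jac}^s_{\varphi_k}$ agrees with the rank of $\Jac^s_{\varphi_k}$. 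Hence $\times L^{k-s}$ fails to be injective (recall $\dim A_s \le \dim A_k$, so non-injectivity is the only way maximal rank can fail) precisely when $\rk \Jac^s_{\varphi_k}(P) < \dim A_s = \binom{n+s}{s}$; since $L$ is general, $P$ is a general point, and by Remark \ref{remark:key} this is exactly the statement that $X_k$ has an osculating defect in order $s$. One routine point to check here is that $s < d$ forces $\dim A_s = \binom{n+s}{s}$, which is needed to match the "expected dimension" bookkeeping — but this holds because $(\Ann_f)_s = 0$ for $s < d$ when $f$ is a single form and $(\Ann_f)_1 = 0$; more carefully, one should only claim $\dim A_s = \rk$ of the full jet matrix, which is what Remark \ref{remark:key} already packages.

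For the equivalence (1) $\Leftrightarrow$ (2), the cleanest route is to cite \cite[Theorem 2.4]{GZ2} (the mixed Hessian criterion), which is precisely the assertion that for an Artinian Gorenstein algebra $A = Q/\Ann_f$, the multiplication map $\times L^{k-s} \colon A_s \to A_k$ has maximal rank if and only if the mixed Hessian matrix $\Hess_f^{s,d-k}(P)$ has maximal rank, evaluated at the point $P = L^\perp$. Under the hypothesis $\dim A_s \le \dim A_k$, maximal rank of $\times L^{k-s}$ means injectivity, so "not injective" $\Leftrightarrow$ "$\Hess_f^{s,d-k}(P)$ not of maximal rank". Since the statement explicitly says "the first equivalence is just a reinterpretation of \cite[Theorem 2.4]{GZ2}", essentially nothing new is required — one should just make sure the indexing conventions for the mixed Hessian match, i.e. that $\Hess_f^{s,d-k}$ is the matrix of the bilinear pairing $A_s \times A_{k} \to A_d \cong \K$ refactored through multiplication by $L^{k-s}$, or equivalently the matrix whose entries are $(\gamma_i \delta_j)(f)$ for bases of $A_s$ and $A_{d-k}$ appropriately chosen. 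A brief sentence reconciling $\Hess_f^{s,d-k}$ with the matrix $[\times L^{k-s}]$ via Macaulay--Matlis duality (the same duality used in the proof of Theorem \ref{cheesebread}) suffices.

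Putting these together yields the chain $(2) \Leftrightarrow (1) \Leftrightarrow (3)$, which is the claim. I would organize the write-up as: first recall that all three conditions concern the same matrix up to nonzero scalar and transpose; then dispatch $(1)\Leftrightarrow(3)$ via Theorem \ref{cheesebread}, Lemma \ref{easy}, Remark \ref{remark:rank} and Remark \ref{remark:key}; then dispatch $(1)\Leftrightarrow(2)$ by quoting \cite[Theorem 2.4]{GZ2}. The only genuine content beyond bookkeeping is the identification in Lemma \ref{easy} that the Macaulay inverse system $I^{-1}_k$ in degree $k$ coincides with the polar linear system $J^{d-k}_k$, so that "the variety $X_k$" in the hypotheses of Theorem \ref{cheesebread} is literally the $k$-th polar image used in condition (3); this is what ties the non-Gorenstein-flavored Theorem \ref{cheesebread} to the Hessian language. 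I expect the main (minor) obstacle to be notational: ensuring the transpose in Theorem \ref{cheesebread}, the choice of dual bases, and the two degree-indices $(s, d-k)$ of the mixed Hessian are lined up consistently, so that "not of maximal rank" really refers to the same rank deficiency in all three items. Once the dictionary is fixed, the proof is a three-line citation argument.
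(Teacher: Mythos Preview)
Your approach is correct and lines up with the paper's in spirit: both establish that the three conditions refer to the rank of one and the same matrix. The one substantive difference is how you handle $(1)\Leftrightarrow(2)$. You propose to cite \cite[Theorem~2.4]{GZ2} directly, whereas the paper derives this equivalence internally: by Lemma~\ref{easy} one has $I^{-1}_k = J^{d-k}_k$, so the components of $\varphi_k$ are precisely the $(d-k)$-th partial derivatives of $f$, and therefore $\widetilde{\Jac}^s_{\varphi_k}$ is literally the mixed Hessian $\Hess_f^{s,d-k}$. Combined with Theorem~\ref{cheesebread} this gives $[\times L^{k-s}]^{tr} = (k-s)!\,\Hess_f^{s,d-k}(P)$ without any appeal to \cite{GZ2}. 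This matters because one of the paper's advertised points is that Theorem~\ref{cheesebread} \emph{recovers} the mixed Hessian criterion; quoting \cite{GZ2} instead would undercut that. Your own organizational remark (``all three conditions concern the same matrix'') is exactly the paper's argument, so you should promote that to the actual proof and drop the external citation.

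One small correction: your aside that $(\Ann_f)_s = 0$ for $s<d$ is false in general (Artinian Gorenstein algebras typically have relations in low degree), and you rightly retract it. The paper's proof does not engage with the ``expected dimension'' bookkeeping at all; it simply equates the three matrices and lets ``osculating defect'' be read as rank deficiency of the Jacobian.
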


\begin{proof} By Theorem \ref{cheesebread}, we get  \[[\times L^{k-s}] = \widetilde{\Jac}^{s}_{\varphi_k}\]

On the other hand, by Lemma \ref{easy} \[\Hess_f^{(s,d-k)}(P) = \widetilde{\Jac}^{s}_{\varphi_k}(P).\]

The result follows.
\end{proof}

\begin{corollary}\label{cor:higherhess}
Let $A = Q/I$ an Artinian Gorenstein algebra with $I=\mathrm{Ann}_f$ of codimension $n+1$ and socle degree $d$. Then, the following assertions are equivalent.
\begin{enumerate}
	\item[(i)] $A$ has $SLP$;
	\item[(ii)] For every $k \leq \frac{d}{2}$, we get $\hess^k_f \neq 0$;
	\item[(iii)] for every  positive integer $k<\frac{d}{2}$,  $T^{k}X_{d-k}$ has the expected dimension.
\end{enumerate}
\end{corollary}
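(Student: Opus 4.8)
The plan is to deduce Corollary \ref{cor:higherhess} directly from Theorem \ref{littlebread} together with the standard reduction, for Artinian Gorenstein algebras, of the SLP to its narrow sense. First I would recall that, by the remark after the definition of the Lefschetz properties (Remark \ref{SLPimpliesWLPimplesUnimodality}), for a standard graded Artinian Gorenstein $\K$-algebra $A$ of socle degree $d$ the SLP is equivalent to the maximality of the rank of $\times L^{d-2k}\colon A_k\to A_{d-k}$ for all $k\le d/2$, for a general linear form $L$ — this is the SLP in the narrow sense. Since $\dim A_k=\dim A_{d-k}$ by Poincaré duality, maximal rank of this map is equivalent to its injectivity (equivalently surjectivity), so $A$ fails SLP iff there exists $k<d/2$ with $\times L^{d-2k}\colon A_k\to A_{d-k}$ not injective for general $L$. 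Here one must be slightly careful about whether ``general'' is quantified correctly: the definition of failure of SLP in the excerpt asks that the map not have maximal rank for \emph{every} $L\in A_1$, while the narrow-sense reduction is stated for \emph{a} (general) $L$; these agree because the locus of $L$ for which the rank drops is Zariski closed, so non-maximality for a general $L$ is the same as non-maximality for every $L$.

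Next I would apply Theorem \ref{littlebread} with the substitution $s=k$ and $k\rightsquigarrow d-k$ (so that ``$k-s$'' there becomes $d-2k$, and the hypothesis $\dim A_s\le\dim A_k$ becomes the equality $\dim A_k=\dim A_{d-k}$, which holds, and the hypothesis $0<s<k<d$ becomes $0<k<d-k<d$, i.e. $k<d/2$). Theorem \ref{littlebread} then gives, for the point $P=L^{\perp}$, the equivalence of: (a) $\times L^{d-2k}\colon A_k\to A_{d-k}$ is not injective; (b) the mixed Hessian $\Hess_f^{(k,k)}(P)=\hess^k_f(P)$ is not of maximal rank; (c) the variety $X_{d-k}$ has osculating defect in order $k$, i.e. $T^kX_{d-k}$ does not have the expected dimension. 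For a \emph{general} $P$ (equivalently general $L$), non-maximal rank of $\hess^k_f$ at $P$ is the same as $\hess^k_f\equiv 0$ as a polynomial, since the determinant of the maximal minors is a polynomial that vanishes generically iff it vanishes identically; this is exactly the higher Hessian vanishing condition in the Maeno--Watanabe criterion. Running the equivalence (a)$\Leftrightarrow$(b)$\Leftrightarrow$(c) over all $k<d/2$ and negating then yields (i)$\Leftrightarrow$(ii)$\Leftrightarrow$(iii): $A$ has SLP iff $\hess^k_f\ne 0$ for all $k\le d/2$ iff $T^kX_{d-k}$ has the expected dimension for all $k<d/2$.

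A couple of boundary bookkeeping points deserve attention. The case $d$ even and $k=d/2$ appears in conditions (i)--(ii) (``$k\le d/2$'') but Theorem \ref{littlebread} requires the strict inequality $k<d-k$, so the middle multiplication map $\times L^0\colon A_{d/2}\to A_{d/2}$ is the identity and its ``rank'' is trivially maximal while $\hess^0_f$ (a $1\times 1$ matrix built from $f$ itself, up to scalar) is automatically nonzero; so including or excluding $k=d/2$ changes nothing, and the restriction to $k<d/2$ in (iii) is the natural one. One should also note that $T^kX_{d-k}$ makes sense and has expected dimension $\binom{n+k}{k}-1$ precisely when $\dim A_k=\binom{n+k}{k}$ is not an issue — but in fact we do not need $A_k$ to be ``full'', only $\dim A_k\le\dim A_{d-k}$, which is the hypothesis we have, so Theorem \ref{littlebread} applies verbatim and the osculating-defect formulation from Remark \ref{remark:key} is the one being used.

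The main obstacle, such as it is, is not a deep one: it is the careful matching of quantifiers on $L$ (``general'' versus ``every'') between the excerpt's definition of failure of SLP, the narrow-sense reduction for Gorenstein algebras, and the pointwise statement of Theorem \ref{littlebread}; plus the translation of ``$\hess^k_f(P)$ not of maximal rank at a general $P$'' into ``$\hess^k_f\equiv 0$''. Both are handled by the standard observation that the relevant degeneracy loci are Zariski closed and a polynomial vanishing on a dense open set vanishes identically, so the proof is genuinely a short reduction to Theorem \ref{littlebread} and Remark \ref{SLPimpliesWLPimplesUnimodality}.
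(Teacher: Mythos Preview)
Your proposal is correct and follows exactly the paper's approach: reduce SLP to its narrow sense for Artinian Gorenstein algebras (Remark \ref{SLPimpliesWLPimplesUnimodality}) and then apply Theorem \ref{littlebread} with $s=k$, $k\rightsquigarrow d-k$. One small slip: in your boundary discussion for $d$ even and $k=d/2$, the relevant Hessian is $\hess^{d/2}_f$, not $\hess^0_f$; it is nonetheless automatically nonzero, being (up to scalar) the determinant of the Poincar\'e duality pairing on $A_{d/2}$, so your conclusion stands.
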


\begin{proof}
Recall that for an AG algebra the SLP is equivalent to the SLP in the narrow sense. The result now follows from  Theorem \ref{littlebread}.
\end{proof}

\bibliography{mref}
\bibliographystyle{abbrv}

\end{document}

\end{document}